\providecommand{\U}[1]{\protect\rule{.1in}{.1in}}
\newtheorem{theorem}{Theorem}
\newtheorem{lemma}[theorem]{Lemma}
\newtheorem{remark}[theorem]{Remark}
\newenvironment{proof}[1][Proof]{\noindent\textbf{#1.} }{\ \rule{0.5em}{0.5em}}
\begin{document}

\title{Stieltjes constants appearing in the Laurent expansion of the hyperharmonic
zeta function}
\author{M\"{u}m\"{u}n Can, Ayhan Dil and Levent Karg\i n\\{\small {Department of Mathematics, Akdeniz University, TR-07058 Antalya,
Turkey}}\\{\small {mcan@akdeniz.edu.tr, adil@akdeniz.edu.tr, lkargin@akdeniz.edu.tr,}}}
\maketitle

\begin{abstract}
In this paper, we consider meromorphic extension of the function
\[
\zeta_{h^{\left(  r\right)  }}\left(  s\right)  =\sum_{k=1}^{\infty}%
\frac{h_{k}^{\left(  r\right)  }}{k^{s}},\text{ }\operatorname{Re}\left(
s\right)  >r,
\]
(which we call \textit{hyperharmonic zeta function}) where $h_{n}^{(r)}$ are
the hyperharmonic numbers. We establish certain constants, denoted
$\gamma_{h^{\left(  r\right)  }}\left(  m\right)  $, which naturally occur in
the Laurent expansion of $\zeta_{h^{\left(  r\right)  }}\left(  s\right)  $.
Moreover, we show that the constants $\gamma_{h^{\left(  r\right)  }}\left(
m\right)  $ and integrals involving generalized exponential integral can be
written as a finite combination of some special constants.

\end{abstract}

\section{Introduction}

The Riemann zeta function which is initially defined by the series%
\[
\zeta\left(  s\right)  =\sum_{n=1}^{\infty}\frac{1}{n^{s}},\text{
}\operatorname{Re}\left(  s\right)  >1,
\]
has an analytic continuation to the whole complex $s$ plane except for a
simple pole $s=1$ with residue $1$. It is well known that around this simple
pole, $\zeta\left(  s\right)  $ has the following Laurent expansion
\begin{equation}
\zeta\left(  s\right)  =\frac{1}{s-1}+\sum_{m=0}^{\infty}\left(  -1\right)
^{m}\frac{\gamma\left(  m\right)  }{m!}\left(  s-1\right)  ^{m},\label{ZL}%
\end{equation}
where the coefficients $\gamma\left(  m\right)  $ are called \textit{Stieltjes
constants}. It is shown by various authors that these constants can be
alternatively presented by the limit%
\begin{equation}
\gamma\left(  m\right)  =\lim_{n\rightarrow\infty}\left(  \sum_{k=1}^{n}%
\frac{\ln^{m}k}{k}-\int_{1}^{n}\frac{\ln^{m}x}{x}dx\right)  ,\text{
}m=0,1,2,\ldots.\label{SC}%
\end{equation}
(see for example \cite{Be,BCh,Ha,LiT} and for an extensive literature
information see \cite{Bl2}). The special case $m=0$ is the famous
Euler-Mascheroni constant $\gamma=\gamma\left(  0\right)
=0.577\,215\,664\,9\ldots$. Besides, the constant $\gamma$\ has relation
$\psi\left(  1\right)  =\Gamma^{^{\prime}}\left(  1\right)  =-\gamma$, where%
\[
\Gamma\left(  x\right)  =\int_{0}^{\infty}t^{x-1}e^{-t}dt,\text{ }x>0
\]
is Euler's gamma function and $\psi\left(  x\right)  =\Gamma^{^{\prime}%
}\left(  x\right)  /\Gamma\left(  x\right)  $ is the digamma function (e.g.
\cite{SC}). It is well-known that for a positive integer $n,$ $\psi\left(
n+1\right)  +\gamma=H_{n}=1+1/2+\cdots+1/n,$ where $H_{n}$ are the harmonic numbers.

There is comprehensive literature on deriving series and integral
representations for the Stieltjes constants and their extensions (see for
example \cite{Bl1,Bl2,C,Cof1,Cof2,HK,FB}). These representations usually allow
a more accurate estimation of mentioned constants (see for example
\cite{Ad,Be,Bl2}).

The Dirichlet series associated with harmonic numbers, so called
\textit{harmonic zeta function}, is defined by
\[
\zeta_{H}\left(  s\right)  =\sum_{k=1}^{\infty}\frac{H_{k}}{k^{s}},\text{
}\operatorname{Re}\left(  s\right)  >1.
\]
The harmonic zeta function has been studied by many authors. Euler \cite[pp.
217--264]{E} gives a closed form formula for $\zeta_{H}\left(  s\right)  $ in
terms of the Riemann zeta values when $s\in\mathbb{N}\backslash\left\{
1\right\}  $. Apostol and Vu \cite{AV} and Matsuoka \cite{Ma} show that the
function $\zeta_{H}$ can be continued meromorphically to the whole complex $s$
plane except for the poles $s=1,$ $s=0$ and $s=1-2j,$ $j\in\mathbb{N}$. Later,
Boyadzhiev et al. \cite{BGP} deal with the Laurent expansion of the harmonic
zeta function\textbf{ }%
\begin{equation}
\zeta_{H}\left(  s\right)  =\frac{a_{-1}}{s-b}+a_{0}+O\left(  s-b\right)  ,
\label{zhL}%
\end{equation}
and give explicitly the coefficient $a_{0}$ when $b=0$ and $b=1-2j$,
$j\in\mathbb{N}$. Recently, using the Ramanujan summation method,
Candelpergher and Coppo \cite{CC} record that the \textit{harmonic Stieltjes
constants} $\gamma_{H}\left(  m\right)  $ defined by the Laurent expansion
\[
\zeta_{H}\left(  s\right)  =\frac{1}{\left(  s-1\right)  ^{2}}+\frac{\gamma
}{s-1}+\sum_{m=0}^{\infty}\left(  -1\right)  ^{m}\frac{\gamma_{H}\left(
m\right)  }{m!}\left(  s-1\right)  ^{m},\text{ }0<\left\vert s-1\right\vert
<1
\]
can be presented as%
\[
\gamma_{H}\left(  m\right)  =\lim_{x\rightarrow\infty}\left(  \sum_{n\leq
x}\frac{H_{n}\ln^{m}n}{n}-\frac{\ln^{m+2}x}{m+2}-\gamma\frac{\ln^{m+1}x}%
{m+1}\right)  .
\]
Besides they also present $\gamma_{H}\left(  0\right)  $ explicitly and
rediscover the coefficient $a_{0}$ in (\ref{zhL}).

We now introduce the main object of this study, \textit{the hyperharmonic zeta
function}:
\[
\zeta_{h^{\left(  r\right)  }}\left(  s\right)  =\sum_{k=1}^{\infty}%
\frac{h_{k}^{\left(  r\right)  }}{k^{s}},\text{ }\operatorname{Re}\left(
s\right)  >r,
\]
where $h_{n}^{(r)}$ are the hyperharmonic numbers defined by \cite{CG}%
\[
h_{n}^{(r)}=\sum_{k=1}^{n}h_{k}^{(r-1)}\text{ with\ }h_{n}^{(0)}=\frac{1}%
{n},\text{ }n,r\in\mathbb{N}.
\]
It is clear that $\zeta_{h^{\left(  0\right)  }}\left(  s\right)
=\zeta\left(  s+1\right)  $ and $\zeta_{h^{\left(  1\right)  }}\left(
s\right)  =\zeta_{H}\left(  s\right)  .$ The Dirichlet series $\sum
_{k=1}^{\infty}h_{k}^{\left(  r\right)  }/k^{s}$\ converges absolutely and
represents an analytic function of $s$ for $\operatorname{Re}\left(  s\right)
>r$ since $h_{n}^{(r)}=O\left(  n^{r-1}\ln n\right)  .$ Kamano \cite{KK} has
shown that the function $\zeta_{h^{\left(  r\right)  }}\left(  s\right)  $ can
be continued meromorphically to the whole complex $s$ plane except for the
double poles at $s=1,2,\ldots,r$ and an infinite number of simple poles at
$s=-k,$ $k\in\mathbb{N}\cup\left\{  0\right\}  $. For positive integer values
of $s$, in which case it is called Euler sum of the hyperharmonic numbers
\cite{MD}, it has enjoyed considerable attention in a number of publications
during the last decade. For instance, evaluations of $\zeta_{h^{\left(
r\right)  }}\left(  m\right)  $, $m\in\mathbb{N}$\ and their extensions in
terms of the Riemann zeta values and some other special constants can be found
in \cite{CKDS,DB,KCDC,MD,Xu2}.

The aim of this paper is to determine certain constants which naturally occur
in the\ Laurent expansion of hyperharmonic zeta function.

The organization of the paper is as follows. In Section \ref{sec-mteo} we
prove that the function $\zeta_{h^{\left(  r\right)  }}$ has a meromorphic
continuation (Theorem \ref{mteo}). In Section \ref{sec-mteo2} we present the
Laurent expansion of the hyperharmonic zeta function in the region
$0<\left\vert s-r\right\vert <1$ (Theorem \ref{mteo2}). For appearing
coefficients, denoted $\gamma_{h^{\left(  r\right)  }}\left(  m\right)  ,$ we
obtain the following limit representation
\begin{equation}
\gamma_{h^{\left(  r\right)  }}\left(  m\right)  =\lim_{x\rightarrow\infty
}\left(  \sum_{n\leq x}\frac{h_{n}^{\left(  r\right)  }\log^{m}n}{n^{r}}%
-\frac{1}{\Gamma\left(  r\right)  }\frac{\ln^{m+2}x}{m+2}+\frac{\psi\left(
r\right)  }{\Gamma\left(  r\right)  }\frac{\ln^{m+1}x}{m+1}\right)
,\label{GSC}%
\end{equation}
by modifying the method of Briggs and Buschman \cite{BB}. It is clear that the
coefficients $\gamma_{h^{\left(  r\right)  }}\left(  m\right)  $, which we
call \textit{hyperharmonic Stieltjes constants}, reduce to the Stieltjes
constants $\gamma\left(  m\right)  $ when $r=0$ (with the assumption
$\psi\left(  0\right)  /\Gamma\left(  0\right)  =\lim_{r\rightarrow0}%
\psi\left(  r\right)  /\Gamma\left(  r\right)  =-1$) and to the harmonic
Stieltjes constants $\gamma_{H}\left(  m\right)  $ when $r=1$. In Section
\ref{sec-ev}\textbf{ }we confer two more representations for $\gamma
_{h^{\left(  r\right)  }}\left(  m\right)  $ in addition to (\ref{GSC}). The
first one is in terms of Stieltjes constants $\gamma_{h^{\left(  r-1\right)
}}\left(  m\right)  ,$ $\gamma\left(  m\right)  $ and values related to the
zeta function (Theorem \ref{teorc}). The second one is in terms of Stieltjes
constants $\gamma_{H}\left(  m\right)  ,$ $\gamma\left(  m\right)  $ and some
other special values related to the Riemann zeta function (Theorem
\ref{teoghr}). In Section \ref{sec-*}, we consider the limit\textbf{ }
\[
\gamma_{h^{\left(  r\right)  }}^{\ast}\left(  m\right)  =\lim_{n\rightarrow
\infty}\left(  \sum_{k=1}^{n}\frac{h_{k}^{\left(  r\right)  }}{k^{r}}\ln
^{m}k-\int_{1}^{n}\frac{h_{x}^{\left(  r\right)  }}{x^{r}}\ln^{m}xdx\right)  .
\]
which is motivated by the interpretation of (\ref{SC})\textbf{ }as%
\begin{equation}
\lim_{n\rightarrow\infty}\left(  \sum_{k=1}^{n}f\left(  k\right)  -\int%
_{1}^{n}f\left(  x\right)  dx\right)  .\label{MaC}%
\end{equation}
(If $f:\left(  0,\infty\right)  \rightarrow\left(  0,\infty\right)  $ is
continuous, strictly decreasing and $\lim\limits_{x\rightarrow\infty}f\left(
x\right)  =0,$ then the limit (\ref{MaC}) exists (cf. \cite{We}).) Here
$h_{x}^{\left(  r\right)  }$ is an analytic extension of $h_{n}^{(r)}$,
defined by \cite{M}
\begin{equation}
h_{x}^{\left(  r\right)  }=\frac{x^{\overline{r}}}{x\Gamma\left(  r\right)
}\left(  \psi\left(  x+r\right)  -\psi\left(  r\right)  \right)  ,\text{
}r,x,r+x\in\mathbb{C}\backslash\left\{  0,-1,-2,...\right\}  ,\label{hxr}%
\end{equation}
where%
\begin{equation}
x^{\overline{r}}=x\left(  x+1\right)  \cdots\left(  x+r-1\right)
=\sum\limits_{j=0}^{r}%
\genfrac{[}{]}{0pt}{}{r}{j}%
x^{j}\label{6}%
\end{equation}
and $%
\genfrac{[}{]}{0pt}{}{r}{j}%
$ are the Stirling numbers of the first kind, with $%
\genfrac{[}{]}{0pt}{}{r}{r}%
=1$ for $r\geq0$\ and $%
\genfrac{[}{]}{0pt}{}{r}{0}%
=0$ for $r>0$.

Moreover, we give a representation for $\gamma_{h^{\left(  r\right)  }}^{\ast
}\left(  m\right)  $ (Theorem \ref{teomod}) which leads us to show that
integrals involving the generalized exponential function
\[
E_{s}^{0}\left(  t\right)  =\int_{1}^{\infty}\frac{e^{-xt}}{x^{s}}dx
\]
can be written in terms of some special constants (Theorem \ref{teo-exp}). The
following examples demonstrate Theorem \ref{teo-exp}:
\begin{align*}
\int_{0}^{\infty}\left(  \frac{1}{t}-\frac{1}{1-e^{-t}}\right)  E_{1}%
^{0}\left(  t\right)  dt  &  =-\gamma\left(  1\right)  -\sigma_{1}%
+\zeta\left(  2\right)  -1,\\
\int_{0}^{\infty}\left(  \frac{1}{t}-\frac{1}{1-e^{-t}}\right)  E_{2}%
^{0}\left(  t\right)  dt  &  =-\gamma+\sigma_{2}-\zeta^{\prime}\left(
2\right)  -\frac{3}{2},
\end{align*}
where%
\[
\sigma_{k}=\sum_{j=1}^{\infty}\frac{\left(  -1\right)  ^{j-1}}{j}\zeta\left(
k+j\right)  ,\text{ }k\geq1\text{ (see \cite{Bo,CC,CoC,DBA}).}%
\]

\section{Meromorphic continuation \label{sec-mteo}}

The following theorem, which was firstly proven by Kamano \cite[Theorem
1.1]{KK}, gives the meromorphic continuation of the function $\zeta
_{h^{\left(  r\right)  }}.$ Here we give a slightly different proof.

\begin{theorem}
\label{mteo}Let $r$ be a non-negative integer. The function $\zeta_{h^{\left(
r\right)  }}$ has a meromorphic continuation to the region\textbf{
}$\mathbb{C}\backslash\left\{  k\in\mathbb{Z}:k\leq r\right\}  $.
\end{theorem}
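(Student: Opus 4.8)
The plan is to establish the meromorphic continuation by reducing $\zeta_{h^{(r)}}$ to objects whose analytic structure is already understood, namely the Riemann zeta function and its shifts. The starting point is the closed form for the hyperharmonic numbers. It is classical (and follows from the definition together with \eqref{hxr} and \eqref{6}) that
\[
h_{n}^{(r)}=\binom{n+r-1}{r-1}\left(  H_{n+r-1}-H_{r-1}\right)  ,
\]
so that $h_{n}^{(r)}$ is a polynomial in $n$ of degree $r-1$ times a harmonic number $H_{n+r-1}$. Using the Stirling-number expansion $n^{\overline{r}}=\sum_{j}\genfrac{[}{]}{0pt}{}{r}{j}n^{j}$ from \eqref{6}, I would write $h_{n}^{(r)}/n^{r}$ as a finite linear combination of terms of the shape $H_{n+r-1}/n^{s+\ell}$ for integer shifts $\ell$. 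Thus, up to a finite $\mathbb{C}$-linear combination with coefficients that are entire (indeed polynomial) in $s$, the function $\zeta_{h^{(r)}}(s)$ is built from Dirichlet series of the form $\sum_{n\ge 1} H_{n+r-1}/n^{s-j}$.

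The next step is to handle the shift inside the harmonic number. Writing $H_{n+r-1}=H_{n}+\sum_{i=1}^{r-1}\frac{1}{n+i}$ separates a genuine harmonic-zeta piece $\sum H_{n}/n^{s-j}=\zeta_{H}(s-j)$ from correction series $\sum_{n}\frac{1}{(n+i)\,n^{s-j}}$. The harmonic-zeta pieces inherit their continuation from the Apostol--Vu / Matsuoka result already cited in the introduction (or, more elementarily, from the $r=1$ case of the very statement being proved). The correction series can each be expanded, via partial fractions in $n$ and the geometric-type identity $\frac{1}{n+i}=\frac{1}{n}\sum_{k\ge 0}(-i/n)^{k}$ truncated with an explicit remainder, into a finite sum of shifted Riemann zeta values $\zeta(s-j+\text{integer})$ plus a tail that converges and is holomorphic on a half-plane extending well to the left of $\mathrm{Re}(s)=r$. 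Iterating this expansion as far left as desired gives meromorphic continuation to any prescribed half-plane, hence to all of $\mathbb{C}$ minus the poles contributed by the zeta factors.

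Finally I would locate the poles. Each $\zeta(s-a)$ contributes a simple pole at $s=a+1$, and the accumulated combination produces, after collecting the finitely many contributions landing on each integer $\le r$, exactly the claimed singular set $\{k\in\mathbb{Z}:k\le r\}$; the double poles at $s=1,\dots,r$ arise where a $\zeta_{H}$-type double pole or two coalescing simple contributions pile up, while the remaining integers $\le 0$ give simple poles. The cleanest bookkeeping is probably to assemble everything into an expression of the form
\[
\zeta_{h^{(r)}}(s)=\sum_{j}c_{j}(s)\,\zeta(s-j)+(\text{explicitly holomorphic remainder}),
\]
with $c_{j}(s)$ polynomial, and then read off the poles from the pole of $\zeta$ at argument $1$.

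The main obstacle I anticipate is the careful control of the remainder terms in the repeated partial-fraction/geometric expansion: one must show that after peeling off finitely many zeta contributions the leftover Dirichlet series converges and is holomorphic on a half-plane reaching arbitrarily far to the left, with uniform estimates, so that the continuation is genuinely to all of $\mathbb{C}$ rather than just a finite strip. A secondary bookkeeping difficulty is verifying that no spurious poles survive and that the pole orders match (double versus simple), which requires tracking the combinatorial coefficients $\genfrac{[}{]}{0pt}{}{r}{j}$ and the partial-fraction constants precisely rather than merely estimating them.
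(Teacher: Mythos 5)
Your argument is sound and reaches the right conclusion, but it takes a genuinely different route from the paper. You reduce $\zeta_{h^{(r)}}$ to the harmonic zeta function: after expanding $n^{\overline{r}}$ by Stirling numbers you split $H_{n+r-1}=H_{n}+\sum_{i=1}^{r-1}\tfrac{1}{n+i}$, absorb the first piece into $\zeta_{H}(s+1-j)$ (whose continuation you import from Apostol--Vu/Matsuoka), and continue the correction series $\sum_{n}\tfrac{1}{(n+i)n^{s+1-j}}$ by the truncated expansion $\tfrac{1}{n+i}=\sum_{k=0}^{K}\tfrac{(-i)^{k}}{n^{k+1}}+\tfrac{(-i)^{K+1}}{n^{K+1}(n+i)}$, which does give finitely many shifted zeta values plus a series holomorphic for $\operatorname{Re}(s)>j-K-1$; since all resulting singularities sit at integers $\leq r$, the claimed region of holomorphy follows. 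The paper instead stays self-contained: it substitutes the Euler--Maclaurin expansion $H_{n}=\ln n+\gamma+\tfrac{1}{2n}+\sum_{m=1}^{k}\zeta(1-2m)n^{-2m}+\int_{n}^{\infty}P_{2k+1}(x)x^{-2k-2}dx$ directly into $h_{n}^{(r)}=\tfrac{n^{\overline{r}}}{n\Gamma(r)}(H_{n+r-1}-\psi(r)-\gamma)$, so the $\ln n$ term produces $-\zeta'(s+1-j)$ (whence the double poles), the Bernoulli tail produces $\zeta(1-2m)\zeta(s+1-j+2m)$, and the remainder integral is shown to be analytic for $\sigma>r-2k-1$ with $k$ arbitrary. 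What the paper's approach buys is uniformity in $r$ (the case $r=1$, i.e.\ $\zeta_{H}$ itself, is covered by the same computation) and an expression from which residues at every pole can be read off; what yours buys is brevity, at the price of an external input. One small caution: your parenthetical suggestion that the continuation of $\zeta_{H}$ could be obtained ``from the $r=1$ case of the very statement being proved'' is circular, since your reduction is vacuous at $r=1$ (the correction sum is empty and you recover $\zeta_{H}(s)=\zeta_{H}(s)$); the citation to Apostol--Vu or Matsuoka is therefore not optional but essential to your proof, and should be stated as such.
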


\begin{proof}
We first recall the following expansion of the harmonic numbers (see for
example \cite[Eq. (2)]{AV})\textbf{ }%
\begin{equation}
H_{n}=\ln n+\gamma+\frac{1}{2n}+\sum\limits_{m=1}^{k}\frac{\zeta\left(
1-2m\right)  }{n^{2m}}+\int_{n}^{\infty}\frac{P_{2k+1}\left(  x\right)
}{x^{2k+2}}dx\text{,} \label{3}%
\end{equation}
where $P_{k}\left(  x\right)  =B_{k}\left(  x-\left\lfloor x\right\rfloor
\right)  $ is the periodic extension of the Bernoulli polynomial $B_{k}\left(
x\right)  $ given by%
\[
\frac{te^{xt}}{e^{t}-1}=\sum\limits_{k=0}^{k}B_{k}\left(  x\right)
\frac{t^{k}}{k!},\text{ }|t|<2\pi
\]
with $\left\lfloor x\right\rfloor $ being the largest integer $\leq x$.

Considering the well-known relation $\psi\left(  n\right)  +\gamma=H_{n-1}%
$\ for $n\in\mathbb{N}$, (\ref{hxr}) becomes%
\begin{equation}
h_{n}^{\left(  r\right)  }=\frac{n^{\overline{r}}}{n\Gamma\left(  r\right)
}\left(  H_{n+r-1}-\psi\left(  r\right)  -\gamma\right)  . \label{2}%
\end{equation}
For $\operatorname{Re}\left(  s\right)  =\sigma>r,$ from (\ref{3}) and
(\ref{2}), we deduce the following representation for $\zeta_{h^{\left(
r\right)  }}\left(  s\right)  $:%
\begin{align}
\zeta_{h^{\left(  r\right)  }}\left(  s\right)   &  =-\frac{1}{\Gamma\left(
r\right)  }\zeta^{\prime}\left(  s+1-r\right)  -\frac{\psi\left(  r\right)
}{\Gamma\left(  r\right)  }\zeta\left(  s+1-r\right) \nonumber\\
&  +\frac{1}{\Gamma\left(  r\right)  }\sum\limits_{j=1}^{r-1}\left\{
\genfrac{[}{]}{0pt}{}{r}{j}%
\left(  -\zeta^{\prime}\left(  s+1-j\right)  -\psi\left(  r\right)
\zeta\left(  s+1-j\right)  \right)  \right. \nonumber\\
&  \qquad\left.  +%
\genfrac{[}{]}{0pt}{}{r}{j+1}%
\left(  \frac{1}{2}\zeta\left(  s+1-j\right)  +\sum\limits_{v=1}^{r-1}%
\sum_{n=1}^{\infty}\frac{1}{n^{s-j}\left(  n+v\right)  }\right)  \right\}
\nonumber\\
&  +\frac{1}{\Gamma\left(  r\right)  }\sum\limits_{j=1}^{r}%
\genfrac{[}{]}{0pt}{}{r}{j}%
\left(  \sum\limits_{m=1}^{k}\zeta\left(  1-2m\right)  \zeta\left(
s+1-j+2m\right)  +R\left(  s,k,j\right)  \right)  , \label{1}%
\end{align}
where
\[
R\left(  s,k,j\right)  =\sum_{n=1}^{\infty}\frac{1}{n^{s+1-j}}\int_{n}%
^{\infty}\frac{P_{2k+1}\left(  x\right)  }{x^{2k+2}}dx.
\]
For each $k\in\mathbb{N}$ the function\textbf{ }$P_{k}\left(  x\right)  $ is
bounded, say $\left\vert P_{k}\left(  x\right)  \right\vert \leq K$, then we
have
\[
\left\vert \sum_{n=1}^{\infty}\frac{1}{n^{s+1-j}}\int_{n}^{\infty}%
\frac{P_{2k+1}\left(  x\right)  }{x^{2k+2}}dx\right\vert \leq\frac{K}%
{2k+1}\sum_{n=1}^{\infty}\frac{1}{n^{\sigma-j+2k+2}}.
\]
Therefore $R\left(  s,k,j\right)  $ is analytic in the half-plane
$\sigma>r-2k-1$, and (\ref{1}) provides the analytic continuation of
$\zeta_{h^{\left(  r\right)  }}\left(  s\right)  $ in the half-plane
$\sigma>r-2k-1$.

It is seen from (\ref{1}) that the singularities of $\zeta_{h^{\left(
r\right)  }}\left(  s\right)  $ arise from the poles of $\zeta^{\prime}\left(
s+1-j\right)  ,$ $\zeta\left(  s+1-j\right)  $, $\sum_{n=1}^{\infty}\frac
{1}{n^{s-j}\left(  n+v\right)  }$ and $\zeta\left(  s+1-j+2m\right)  $.

For $r>0$ there is a second-order pole at $s=r$ arising from the term
$-\frac{1}{\Gamma\left(  r\right)  }\zeta^{\prime}\left(  s+1-r\right)
-\frac{\psi\left(  r\right)  }{\Gamma\left(  r\right)  }\zeta\left(
s+1-r\right)  $ with residue $-\frac{\psi\left(  r\right)  }{\Gamma\left(
r\right)  }$. Moreover, there are second-order poles at $s=j$ for $1\leq j\leq
r-1$, and simple poles at $s=j-2m$ with $j-2m<1$ for $1\leq j\leq r$ and
$1\leq m\leq k$ $.$ Since $k$ is an arbitrary positive integer, this implies
that $\zeta_{h^{\left(  r\right)  }}\left(  s\right)  $ has a simple pole at
every non-positive integer.
\end{proof}

\begin{remark}
Interested readers can find the residues of $\zeta_{h^{\left(  r\right)  }%
}\left(  s\right)  $ at $s=k,$ $k\in\mathbb{Z}$ with $k<r$\ in \cite[Theorem
1.1]{KK}. However, there are little misprints; the factors $r!$ in equations (1.6), (1.7) and (1.8) of \cite[Theorem 1.1]{KK} should be $\left(
r-1\right)  !$.
\end{remark}

\section{Laurent expansion and hyperharmonic Stieltjes
constants\label{sec-mteo2}}

This section is devoted to determine the hyperharmonic Stieltjes constants,
the main theme of this study.

\begin{theorem}
\label{mteo2}Let $r$ be a non-negative integer. The hyperharmonic zeta
function has the following Laurent expansion in the annulus $0<\left\vert
s-r\right\vert <1$%
\[
\zeta_{h^{\left(  r\right)  }}\left(  s\right)  =\frac{a_{-2}}{\left(
s-r\right)  ^{2}}+\frac{a_{-1}}{s-r}+\sum_{m=0}^{\infty}\frac{\left(
-1\right)  ^{m}\gamma_{h^{\left(  r\right)  }}\left(  m\right)  }{m!}\left(
s-r\right)  ^{m},
\]
where $a_{-2}=1/\Gamma\left(  r\right)  ,$ $a_{-1}=-\psi\left(  r\right)
/\Gamma\left(  r\right)  $ and
\[
\gamma_{h^{\left(  r\right)  }}\left(  m\right)  =\lim_{x\rightarrow\infty
}\left(  \sum_{n\leq x}\frac{h_{n}^{\left(  r\right)  }\log^{m}n}{n^{r}%
}-a_{-2}\frac{\ln^{m+2}x}{m+2}-a_{-1}\frac{\ln^{m+1}x}{m+1}\right)  .
\]

\end{theorem}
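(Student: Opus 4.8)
The plan is to substitute $w=s-r$, write $\zeta_{h^{(r)}}(r+w)=\sum_{n\ge1}f(n)\,n^{-w}$ with $f(n)=h_{n}^{(r)}/n^{r}$, and peel off from $f$ exactly the part that generates the double pole at $w=0$. Guided by the principal-part data, I would set
\[
g(t)=a_{-2}\frac{\ln t}{t}+a_{-1}\frac{1}{t},\qquad a_{-2}=\frac{1}{\Gamma(r)},\quad a_{-1}=-\frac{\psi(r)}{\Gamma(r)},
\]
which is precisely the leading asymptotic term of $f$. This choice is tailored to two facts that drive the whole argument: first, $\int_{1}^{x}g(t)\ln^{m}t\,dt=a_{-2}\frac{\ln^{m+2}x}{m+2}+a_{-1}\frac{\ln^{m+1}x}{m+1}$ reproduces exactly the polynomial in $\ln x$ subtracted in the statement; second, $\sum_{n\ge1}g(n)\,n^{-w}=-a_{-2}\zeta^{\prime}(1+w)+a_{-1}\zeta(1+w)$, whose Laurent expansion at $w=0$ is completely explicit. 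The decomposition $\zeta_{h^{(r)}}(r+w)=\sum_{n\ge1}\big(f(n)-g(n)\big)n^{-w}+\sum_{n\ge1}g(n)n^{-w}$ is valid for $\operatorname{Re}(w)>0$ and then extends by analytic continuation. (For $r=0$ one has $f=g=1/n$ identically, and the statement collapses to the classical (\ref{ZL})--(\ref{SC}); so I would carry out the computation for $r\ge1$.)

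First I would pin down the asymptotics. Starting from (\ref{2}) together with the classical expansion $H_{m}=\ln m+\gamma+O(1/m)$, a short computation gives
\[
f(n)=\frac{h_{n}^{(r)}}{n^{r}}=\frac{\ln n-\psi(r)}{\Gamma(r)\,n}+O\!\Big(\frac{\ln n}{n^{2}}\Big)=g(n)+O\!\Big(\frac{\ln n}{n^{2}}\Big),
\]
which simultaneously fixes the values of $a_{-2}$ and $a_{-1}$ and shows that $\sum_{n\ge1}\big(f(n)-g(n)\big)\ln^{m}n$ converges absolutely for every $m\ge0$.

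Next I would establish that the limit exists and evaluate it. Writing
\[
\sum_{n\le x}f(n)\ln^{m}n-\int_{1}^{x}g(t)\ln^{m}t\,dt=\sum_{n\le x}\big(f(n)-g(n)\big)\ln^{m}n+\Big(\sum_{n\le x}g(n)\ln^{m}n-\int_{1}^{x}g(t)\ln^{m}t\,dt\Big),
\]
the first sum converges by the previous step, while the bracketed term equals $a_{-2}\big(\sum_{n\le x}\tfrac{\ln^{m+1}n}{n}-\tfrac{\ln^{m+2}x}{m+2}\big)+a_{-1}\big(\sum_{n\le x}\tfrac{\ln^{m}n}{n}-\tfrac{\ln^{m+1}x}{m+1}\big)$ and hence tends to $a_{-2}\gamma(m+1)+a_{-1}\gamma(m)$ by the classical Stieltjes limit (\ref{SC}). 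Thus the limit in the statement exists and equals $\sum_{n\ge1}\big(f(n)-g(n)\big)\ln^{m}n+a_{-2}\gamma(m+1)+a_{-1}\gamma(m)$; this is the modification of the Briggs--Buschman comparison to the present double-pole situation.

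Finally I would identify this limit with the Laurent coefficient. Since $f(n)-g(n)=O(\ln n/n^{2})$, the series $R(w)=\sum_{n\ge1}\big(f(n)-g(n)\big)n^{-w}$ converges absolutely and is analytic for $\operatorname{Re}(w)>-1$, so term-by-term differentiation makes its coefficient of $w^{m}$ equal to $\tfrac{(-1)^{m}}{m!}\sum_{n\ge1}\big(f(n)-g(n)\big)\ln^{m}n$. Inserting the known Laurent data of $\zeta$ and $\zeta^{\prime}$ into $\sum_{n\ge1}g(n)n^{-w}=-a_{-2}\zeta^{\prime}(1+w)+a_{-1}\zeta(1+w)$ shows that this piece has principal part $a_{-2}/w^{2}+a_{-1}/w$ and regular part $\sum_{k\ge0}\tfrac{(-1)^{k}}{k!}\big(a_{-2}\gamma(k+1)+a_{-1}\gamma(k)\big)w^{k}$. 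Adding the two contributions yields the asserted Laurent expansion, whose coefficient of $(s-r)^{m}$ is $\tfrac{(-1)^{m}}{m!}$ times the limit computed above; comparing with the definition of $\gamma_{h^{(r)}}(m)$ finishes the proof. The one step needing genuine care is this last interchange of the expansion in powers of $w$ with the summation and the limit, and I expect it to be the main obstacle; I would resolve it exactly as indicated, namely by the absolute convergence (hence analyticity near $w=0$) of the remainder $R(w)$ and the explicit Laurent expansions of $\zeta$ and $\zeta^{\prime}$ for the singular piece, so that no delicate uniformity estimate beyond these is required.
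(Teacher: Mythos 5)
Your argument is correct, and it reaches the theorem by a genuinely different route than the paper. The paper adapts the Briggs--Buschman method at the level of the summatory function: by Abel summation (Lemma \ref{lem2}) it writes $\zeta_{h^{(r)}}(s)=s\int_1^{\infty}x^{-s-1}\left(\sum_{k\le x}h_k^{(r)}\right)dx$, subtracts the asymptotic main term of $\sum_{k\le x}h_k^{(r)}=h_{\lfloor x\rfloor}^{(r+1)}$ to isolate an error $E(x)=O(x^{r-1}\log x)$, shows that $f(s)=s\int_1^{\infty}x^{-s-1}E(x)\,dx$ is analytic for $\operatorname{Re}(s)>r-1$ and encodes the regular Laurent coefficients (Lemma \ref{lem3}), and then uses a second Abel summation (Theorem \ref{teo1}) to tie $\sum_{n\le x}h_n^{(r)}n^{u}\log^{m}n$ to $f^{(m)}(-u)$ before setting $u=-r$. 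You instead subtract the main term of the \emph{individual} coefficients, $h_n^{(r)}/n^{r}=a_{-2}\ln n/n+a_{-1}/n+O(\ln n/n^{2})$, which does follow from (\ref{2}); the singular piece then becomes exactly $-a_{-2}\zeta'(1+w)+a_{-1}\zeta(1+w)$ with its classical Laurent data, the remainder is an absolutely convergent Dirichlet series analytic for $\operatorname{Re}(w)>-1$ whose Taylor coefficients at $w=0$ are computed termwise, and the limit formula reduces to the classical Stieltjes limit (\ref{SC}) applied to $\gamma(m)$ and $\gamma(m+1)$. Your route is shorter and more self-contained for this particular statement (it independently supplies the continuation to $\operatorname{Re}(s)>r-1$, which is all the annulus $0<|s-r|<1$ requires), and it yields as a byproduct the absolutely convergent representation $\gamma_{h^{(r)}}(m)=\sum_{n\ge1}\left(h_n^{(r)}/n^{r}-a_{-2}\ln n/n-a_{-1}/n\right)\ln^{m}n+a_{-2}\gamma(m+1)+a_{-1}\gamma(m)$, close in spirit to the paper's later Theorem \ref{teoghr}; the paper's integral formulation, on the other hand, is the one that meshes with its machinery for the full meromorphic continuation and the remaining poles. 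The delicate step you flag --- interchanging the power-series expansion with the summation and the limit --- is indeed covered by the locally uniform absolute convergence of the remainder series on $\operatorname{Re}(w)>-1$ together with the explicit expansions of $\zeta$ and $\zeta'$ at $1$, so I see no gap.
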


It is clear from the proof of Theorem \ref{mteo} that, $\zeta_{h^{\left(
r\right)  }}\left(  s\right)  $ has the Laurent series
\[
\zeta_{h^{\left(  r\right)  }}\left(  s\right)  =\frac{a_{-2}}{\left(
s-r\right)  ^{2}}+\frac{a_{-1}}{s-r}+\sum_{m=0}^{\infty}a_{m}\left(
s-r\right)  ^{m}%
\]
in the annulus $0<|s-r|<1,$ where $a_{-2}=1/\Gamma\left(  r\right)  $ and
$a_{-1}=-\psi\left(  r\right)  /\Gamma\left(  r\right)  $.

To complete the proof we must show that
\[
a_{m}=\frac{\left(  -1\right)  ^{m}}{m!}\lim_{x\rightarrow\infty}\left(
\sum_{n\leq x}\frac{h_{n}^{\left(  r\right)  }\log^{m}n}{n^{r}}-a_{-2}%
\frac{\ln^{m+2}x}{m+2}-a_{-1}\frac{\ln^{m+1}x}{m+1}\right)  .
\]
For this purpose we give some lemmas and a theorem.

\begin{lemma}
[Abel summation formula]\label{lem1}If $b_{1},b_{2},b_{3},\ldots$ is a
sequence of complex numbers and $v\left(  x\right)  $ has a continuous
derivative for $x>1$, then%
\[
\sum_{n\leq x}b_{n}v\left(  n\right)  =\left(  \sum_{n\leq x}b_{n}\right)
v\left(  x\right)  -\int\limits_{1}^{x}\left(  \sum_{n\leq t}b_{n}\right)
v^{\prime}\left(  t\right)  dt.
\]

\end{lemma}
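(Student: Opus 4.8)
The plan is to prove the identity by a standard summation-by-parts argument, converting the discrete difference operator acting on $v$ into the integral of $v'$, and then carefully accounting for the non-integer upper limit $x$. Throughout I would write $N=\left\lfloor x\right\rfloor $ and introduce the partial-sum (counting) function $B\left(t\right)=\sum_{n\leq t}b_{n}$, with the convention $B\left(t\right)=0$ for $t<1$; thus $B$ is constant on each half-open interval $\left[n,n+1\right)$ and satisfies $b_{n}=B\left(n\right)-B\left(n-1\right)$ with $B\left(0\right)=0$.

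First I would perform the discrete summation by parts on the left-hand side. Writing $\sum_{n\leq x}b_{n}v\left(n\right)=\sum_{n=1}^{N}\left(B\left(n\right)-B\left(n-1\right)\right)v\left(n\right)$ and shifting the index in the second sum gives
\[
\sum_{n\leq x}b_{n}v\left(n\right)=B\left(N\right)v\left(N\right)+\sum_{n=1}^{N-1}B\left(n\right)\left(v\left(n\right)-v\left(n+1\right)\right),
\]
where the boundary term at $n=0$ vanishes because $B\left(0\right)=0$. Next I would convert the finite differences into integrals: since $v$ has a continuous derivative, $v\left(n\right)-v\left(n+1\right)=-\int_{n}^{n+1}v^{\prime}\left(t\right)dt$, and because $B\left(t\right)=B\left(n\right)$ for $t\in\left[n,n+1\right)$ the constant $B\left(n\right)$ may be folded inside the integral, so that adding the pieces yields
\[
\sum_{n=1}^{N-1}B\left(n\right)\left(v\left(n\right)-v\left(n+1\right)\right)=-\int_{1}^{N}B\left(t\right)v^{\prime}\left(t\right)dt.
\]
Combining the last two displays establishes the identity with the integer endpoint $N$ in place of $x$.

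The last and only delicate step is upgrading the integer endpoint $N$ to the real endpoint $x$; this is where the care is needed, since the summation range $n\leq x$ on the left is in fact $n\leq N$. To handle it I would compute the desired right-hand side directly: splitting $\int_{1}^{x}=\int_{1}^{N}+\int_{N}^{x}$ and using that $B$ is constant on $\left[N,x\right]$ gives $\int_{N}^{x}B\left(t\right)v^{\prime}\left(t\right)dt=B\left(N\right)\left(v\left(x\right)-v\left(N\right)\right)$, while $B\left(x\right)=B\left(N\right)$. Substituting these into $B\left(x\right)v\left(x\right)-\int_{1}^{x}B\left(t\right)v^{\prime}\left(t\right)dt$ makes the $v\left(x\right)$ contributions cancel and collapses the expression to $B\left(N\right)v\left(N\right)-\int_{1}^{N}B\left(t\right)v^{\prime}\left(t\right)dt$, which by the integer-endpoint identity equals $\sum_{n\leq x}b_{n}v\left(n\right)$. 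This proves the stated formula. The only points demanding attention are the careful bookkeeping of boundary terms in the summation by parts and the constancy of $B$ on $\left[N,x\right]$ used to pass from $N$ to $x$; everything else is routine.
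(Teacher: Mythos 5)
The paper states this lemma as a standard fact (Abel/partial summation) and gives no proof of its own, so there is nothing to compare against; your argument is the classical derivation and it is correct and complete. In particular, you handle the two points that are actually delicate --- the vanishing boundary term via $B(0)=0$ and the passage from the integer endpoint $N=\lfloor x\rfloor$ to the real endpoint $x$ using the constancy of $B$ on $[N,x]$ --- exactly as one should.
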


For $n\leq x<n+1$ we see from (\ref{3}) and (\ref{2}) that
\begin{align*}
\sum_{k\leq x}h_{k}^{\left(  r\right)  }  &  =h_{n}^{\left(  r+1\right)
}=\frac{n^{\overline{r+1}}}{n\Gamma\left(  r+1\right)  }\left(  \log
n-\psi\left(  r+1\right)  +O\left(  \frac{1}{n}\right)  \right) \\
&  =\frac{1}{\Gamma\left(  r+1\right)  }n^{r}\log n-\frac{\psi\left(
r+1\right)  }{\Gamma\left(  r+1\right)  }n^{r}+O\left(  n^{r-1}\log n\right)
.
\end{align*}
Setting $b_{n}=h_{n}^{\left(  r\right)  }$ and $v\left(  x\right)  =x^{-s}$ in
Lemma \ref{lem1} gives the following result.

\begin{lemma}
\label{lem2}Let $r$ be a non-negative integer. For $\operatorname{Re}\left(
s\right)  >r$ we have%
\[
\zeta_{h^{\left(  r\right)  }}\left(  s\right)  =s\int\limits_{1}^{\infty
}x^{-s-1}\left(  \sum_{k\leq x}h_{k}^{\left(  r\right)  }\right)  dx.
\]

\end{lemma}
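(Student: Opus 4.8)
The plan is to apply the Abel summation formula of Lemma~\ref{lem1} with the choices $b_n=h_n^{(r)}$ and $v(x)=x^{-s}$, which gives
\[
\sum_{n\leq x}\frac{h_n^{(r)}}{n^{s}}=\left(\sum_{n\leq x}h_n^{(r)}\right)x^{-s}+s\int_{1}^{x}\left(\sum_{k\leq t}h_k^{(r)}\right)t^{-s-1}\,dt.
\]
The partial sum $\sum_{k\leq x}h_k^{(r)}=h_n^{(r+1)}$ (for $n\leq x<n+1$) was shown just above the statement to satisfy the asymptotic $\sum_{k\leq x}h_k^{(r)}=O(x^{r}\log x)$. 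First I would use this to control the boundary term: for $\operatorname{Re}(s)=\sigma>r$ we have $\bigl(\sum_{n\leq x}h_n^{(r)}\bigr)x^{-s}=O(x^{r-\sigma}\log x)\to 0$ as $x\to\infty$. Hence letting $x\to\infty$ kills the boundary contribution and leaves only the integral term.

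Second, I would argue that the resulting improper integral converges absolutely on the half-plane $\sigma>r$: since the integrand is $O(t^{r-\sigma-1}\log t)$ and $r-\sigma-1<-1$, the integral $\int_1^{\infty}\bigl(\sum_{k\leq t}h_k^{(r)}\bigr)t^{-s-1}\,dt$ converges, justifying the interchange of the limit with the integral. On the left-hand side, the limit of $\sum_{n\leq x}h_n^{(r)}/n^{s}$ as $x\to\infty$ is by definition $\zeta_{h^{(r)}}(s)$ (the defining series converges absolutely for $\sigma>r$, as recalled in the introduction). Equating the two limits yields
\[
\zeta_{h^{(r)}}(s)=s\int_{1}^{\infty}x^{-s-1}\left(\sum_{k\leq x}h_k^{(r)}\right)dx,
\]
which is exactly the claimed identity.

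The only mildly delicate point is the vanishing of the boundary term, and this is where I would be most careful: the term is $\bigl(\sum_{n\leq x}h_n^{(r)}\bigr)x^{-s}$ rather than $\bigl(\sum_{n\leq x}h_n^{(r)}\bigr)v(x)$ evaluated at an integer, so one should note that the Abel identity as stated already places $v(x)=x^{-s}$ at the (real) upper limit $x$, and the asymptotic $\sum_{k\leq x}h_k^{(r)}=O(x^{r}\log x)$ holds uniformly in $x$ (not merely along integers), so no subtlety arises from the floor. Everything else is the routine observation that $\sigma>r$ forces both the boundary decay and the absolute convergence of the integral; there is no genuine obstacle here, the content being entirely a clean specialization of Lemma~\ref{lem1} combined with the growth estimate established immediately before the lemma.
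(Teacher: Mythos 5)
Your proof is correct and follows exactly the paper's route: the paper likewise obtains Lemma~\ref{lem2} by setting $b_n=h_n^{(r)}$ and $v(x)=x^{-s}$ in Lemma~\ref{lem1} and using the growth estimate $\sum_{k\leq x}h_k^{(r)}=O(x^{r}\log x)$ established just beforehand. Your explicit verification that the boundary term vanishes and the improper integral converges for $\sigma>r$ simply spells out details the paper leaves implicit.
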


\begin{lemma}
\label{lem3}Let $r$ be a non-negative integer. Let%
\begin{equation}
E\left(  x\right)  =\sum_{k\leq x}h_{k}^{\left(  r\right)  }-\frac{1}%
{\Gamma\left(  r+1\right)  }x^{r}\log x+\frac{\psi\left(  r+1\right)  }%
{\Gamma\left(  r+1\right)  }x^{r}=O\left(  x^{r-1}\log x\right)  . \label{5}%
\end{equation}
Then for $\operatorname{Re}\left(  s\right)  >r-1$ we have%
\[
f\left(  s\right)  :=s\int\limits_{1}^{\infty}x^{-s-1}E\left(  x\right)
dx=\frac{\psi\left(  r+1\right)  }{\Gamma\left(  r+1\right)  }+\sum
_{n=0}^{\infty}a_{n}\left(  s-r\right)  ^{n}.
\]

\end{lemma}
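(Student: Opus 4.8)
The plan is to express the integral $f(s)$ in terms of the already-known Laurent data of $\zeta_{h^{(r)}}$ by subtracting off the two explicit main terms hidden in $E(x)$, and then to read off the claimed identity by comparing principal parts about $s=r$.

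First I would exploit the $O$-estimate recorded in (\ref{5}). Since $E(x)=O(x^{r-1}\log x)$, the integrand satisfies $x^{-s-1}E(x)=O(x^{r-\sigma-2}\log x)$ with $\sigma=\operatorname{Re}(s)$, so the integral defining $f$ converges absolutely and locally uniformly as soon as $\sigma>r-1$. A routine application of Morera's theorem (or differentiation under the integral sign, using the same bound for local uniformity) then shows that $f$ is analytic throughout the half-plane $\operatorname{Re}(s)>r-1$; in particular $f$ is \emph{regular} at $s=r$. This regularity is the structural fact that drives the whole argument.

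Next, for $\operatorname{Re}(s)>r$ I would substitute the definition (\ref{5}) of $E(x)$ into Lemma \ref{lem2}, writing
\[
\sum_{k\leq x}h_{k}^{(r)}=E(x)+\frac{1}{\Gamma(r+1)}x^{r}\log x-\frac{\psi(r+1)}{\Gamma(r+1)}x^{r},
\]
and splitting the integral into three pieces, each absolutely convergent for $\sigma>r$. The two explicit integrals are elementary,
\[
\int_{1}^{\infty}x^{r-s-1}\,dx=\frac{1}{s-r},\qquad \int_{1}^{\infty}x^{r-s-1}\log x\,dx=\frac{1}{(s-r)^{2}}\quad(\operatorname{Re}(s)>r),
\]
the second obtained by differentiating the first in the parameter $s$. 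This yields, for $\operatorname{Re}(s)>r$, the identity
\[
f(s)=\zeta_{h^{(r)}}(s)-\frac{s}{\Gamma(r+1)(s-r)^{2}}+\frac{s\,\psi(r+1)}{\Gamma(r+1)(s-r)}.
\]

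Finally I would expand the right-hand side about $s=r$. Putting $u=s-r$ and using $\Gamma(r+1)=r\Gamma(r)$ together with $\psi(r+1)=\psi(r)+1/r$, the two explicit rational terms contribute the principal part $-\frac{1}{\Gamma(r)u^{2}}+\frac{\psi(r)}{\Gamma(r)u}$ plus the constant $\frac{\psi(r+1)}{\Gamma(r+1)}$. Since $a_{-2}=1/\Gamma(r)$ and $a_{-1}=-\psi(r)/\Gamma(r)$, this principal part is precisely $-\bigl(\tfrac{a_{-2}}{u^{2}}+\tfrac{a_{-1}}{u}\bigr)$ and cancels the singular part of $\zeta_{h^{(r)}}(s)$, leaving exactly $\frac{\psi(r+1)}{\Gamma(r+1)}+\sum_{n\geq0}a_{n}(s-r)^{n}$, as claimed. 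The main obstacle here is one of rigor rather than computation: the displayed identity for $f$ is derived only on $\operatorname{Re}(s)>r$, so I must invoke uniqueness of analytic continuation (both sides being meromorphic on $\operatorname{Re}(s)>r-1$) to transport it to a punctured neighbourhood of $s=r$. The regularity of $f$ at $s=r$ established in the first step is exactly what forces the double and simple poles to cancel, and simultaneously gives an independent confirmation of the values of $a_{-2}$ and $a_{-1}$. For $r=0$ the same bookkeeping goes through under the stated convention $\psi(0)/\Gamma(0)=-1$, in agreement with $\zeta_{h^{(0)}}(s)=\zeta(s+1)$ having only a simple pole.
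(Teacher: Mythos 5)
Your proposal is correct and follows essentially the same route as the paper: establish analyticity of $f$ for $\operatorname{Re}(s)>r-1$ from the bound $E(x)=O(x^{r-1}\log x)$, substitute the definition of $E$ into Lemma \ref{lem2} for $\operatorname{Re}(s)>r$, evaluate the two elementary integrals, and observe that the principal part of $\zeta_{h^{(r)}}(s)$ cancels, leaving the constant $\frac{\psi(r+1)}{\Gamma(r+1)}=\frac{1}{r^{2}\Gamma(r)}+\frac{\psi(r)}{r\Gamma(r)}$ plus the regular tail. Your explicit appeal to uniqueness of analytic continuation to pass from $\operatorname{Re}(s)>r$ to a neighbourhood of $s=r$ is a point the paper leaves implicit, but the argument is the same.
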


\begin{proof}
For $\operatorname{Re}\left(  s\right)  >r-1$ the aforementioned integral is
an analytic function. Moreover for $\operatorname{Re}\left(  s\right)  >r$ we
have%
\begin{align*}
s\int\limits_{1}^{\infty}x^{-s-1}E\left(  x\right)  dx  &  =\zeta_{h^{\left(
r\right)  }}\left(  s\right)  -\frac{1}{\Gamma\left(  r+1\right)  }%
s\int\limits_{1}^{\infty}x^{r-s-1}\log xdx+\frac{\psi\left(  r+1\right)
}{\Gamma\left(  r+1\right)  }s\int\limits_{1}^{\infty}x^{r-s-1}dx\\
&  =\frac{a_{-2}}{\left(  s-r\right)  ^{2}}+\frac{a_{-1}}{s-r}+\sum
_{n=0}^{\infty}a_{n}\left(  s-r\right)  ^{n}-\frac{1}{\Gamma\left(
r+1\right)  }\frac{s}{\left(  r-s\right)  ^{2}}+\frac{\psi\left(  r+1\right)
}{\Gamma\left(  r+1\right)  }\frac{s}{s-r}\\
&  =\frac{1}{r^{2}\Gamma\left(  r\right)  }+\frac{\psi\left(  r\right)
}{r\Gamma\left(  r\right)  }+\sum_{n=0}^{\infty}a_{n}\left(  s-r\right)  ^{n}%
\end{align*}
from (\ref{5}) and Lemma \ref{lem2}. The proof is then completed.
\end{proof}

\begin{theorem}
\label{teo1}Let $m$ and $r$ be non-negative integers. Let $u<-\left(
r-1\right)  $. Then,%
\begin{align*}
\sum_{n\leq x}h_{n}^{\left(  r\right)  }n^{u}\log^{m}n  &  =\frac{1}%
{\Gamma\left(  r\right)  }\int\limits_{1}^{x}t^{r+u-1}\log^{m+1}tdt-\frac
{\psi\left(  r\right)  }{\Gamma\left(  r\right)  }\int\limits_{1}^{x}%
t^{r+u-1}\log^{m}tdt\\
&  -\frac{\psi\left(  r+1\right)  }{\Gamma\left(  r+1\right)  }\frac{d^{m}%
}{du^{m}}1+o\left(  1\right)  +\left(  -1\right)  ^{m}f^{\left(  m\right)
}\left(  -u\right)  ,
\end{align*}
where $f\left(  s\right)  $ is given in Lemma \ref{lem3}.
\end{theorem}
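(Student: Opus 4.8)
The plan is to reduce everything to the case $m=0$ by exploiting $\log^m n=\frac{d^m}{du^m}n^u$: the general statement will follow by differentiating $m$ times, in the parameter $u$, a single exact identity, provided its remainder survives differentiation. Throughout I set $s=-u$, so that the hypothesis $u<-(r-1)$ is precisely $\operatorname{Re}(s)>r-1$, the half-plane on which $f$ is analytic in Lemma \ref{lem3}.

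First I would extract from Lemma \ref{lem3} the relation $f(-u)=-u\int_1^\infty t^{u-1}E(t)\,dt$ and split off the tail,
\[
-u\int_1^x t^{u-1}E(t)\,dt=f(-u)-W(u,x),\qquad W(u,x):=-u\int_x^\infty t^{u-1}E(t)\,dt.
\]
Since $E(t)=O(t^{r-1}\log t)$ by (\ref{5}) and $r+u-1<0$, differentiating under the integral sign shows that $W$ and every one of its $u$-derivatives is $O(x^{r+u-1}\log^{m+1}x)=o(1)$; this is exactly the place where the hypothesis on $u$ is used.

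Next I would apply Abel summation. Taking $b_n=h_n^{(r)}$ and $v(t)=t^u$ in Lemma \ref{lem1} gives
\[
u\int_1^x t^{u-1}\Big(\sum_{k\le t}h_k^{(r)}\Big)dt=\Big(\sum_{k\le x}h_k^{(r)}\Big)x^u-\sum_{n\le x}h_n^{(r)}n^u,
\]
and substituting $E(t)=\sum_{k\le t}h_k^{(r)}-\frac1{\Gamma(r+1)}t^r\log t+\frac{\psi(r+1)}{\Gamma(r+1)}t^r$ together with the asymptotics of $\sum_{k\le x}h_k^{(r)}$ recorded before Lemma \ref{lem2} turns the two previous displays into the \emph{exact} identity (for fixed $x$)
\[
\sum_{n\le x}h_n^{(r)}n^u=f(-u)+G(u)-\frac{\psi(r+1)}{\Gamma(r+1)}+E(x)x^u-W(u,x),
\]
where $G(u)=\frac1{\Gamma(r)}\int_1^x t^{r+u-1}\log t\,dt-\frac{\psi(r)}{\Gamma(r)}\int_1^x t^{r+u-1}dt$. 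Producing this identity is the computational heart of the proof: integrating the two correction integrals by parts, one must check that the growing boundary terms $x^{r+u}\log x$ and $x^{r+u}$ arising from $\big(\sum_{k\le x}h_k^{(r)}\big)x^u$ cancel \emph{exactly} against the boundary parts of $G(u)$, leaving the single constant $-\psi(r+1)/\Gamma(r+1)$; the bookkeeping needs only $\Gamma(r+1)=r\Gamma(r)$ and $\psi(r+1)=\psi(r)+1/r$.

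Finally I would differentiate this identity $m$ times in $u$, which is legitimate since each term is smooth in $u$. On the left $\frac{d^m}{du^m}n^u=n^u\log^m n$ rebuilds the target sum; differentiation under the integral in $G(u)$ yields exactly $\frac1{\Gamma(r)}\int_1^x t^{r+u-1}\log^{m+1}t\,dt-\frac{\psi(r)}{\Gamma(r)}\int_1^x t^{r+u-1}\log^m t\,dt$; the chain rule gives $\frac{d^m}{du^m}f(-u)=(-1)^m f^{(m)}(-u)$; the constant contributes $-\frac{\psi(r+1)}{\Gamma(r+1)}\frac{d^m}{du^m}1$, which is nonzero only for $m=0$; and the remainder $E(x)x^u-W(u,x)$, being $o(1)$ together with all its $u$-derivatives, again contributes $o(1)$. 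Assembling these pieces is the claimed formula. The main obstacle I anticipate is the exact term-by-term collapse in the master identity, hand in hand with the uniform $o(1)$ control of the differentiated remainder; both become routine once $r+u-1<0$ is in force.
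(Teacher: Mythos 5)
Your proposal is correct and follows essentially the same route as the paper: Abel summation with $b_n=h_n^{(r)}$, the decomposition of $\sum_{k\le t}h_k^{(r)}$ via $E(t)$ and the function $f$ of Lemma \ref{lem3}, and generation of the $\log^m$ factors by $m$-fold differentiation in $u$. The only difference is organizational: the paper takes $v(x)=x^u\log^m x$ in the Abel summation and commutes the derivative through via $\frac{d}{dt}\left(t^u\log^m t\right)=\frac{d^m}{du^m}\left(ut^{u-1}\right)$, whereas you establish the exact $m=0$ identity first and differentiate it at the end; the resulting computations, including the boundary-term cancellation leaving $-\psi(r+1)/\Gamma(r+1)$ and the $o(1)$ control under $r+u-1<0$, are identical.
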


\begin{proof}
Set $v\left(  x\right)  =x^{u}\log^{m}x,$ $u<-r$ and $b_{n}=h_{n}^{\left(
r\right)  }$ in Lemma \ref{lem1}. Then%
\[
S:=\sum_{n\leq x}h_{n}^{\left(  r\right)  }n^{u}\log^{m}n=x^{u}\log^{m}%
x\sum_{n\leq x}h_{n}^{\left(  r\right)  }-\int\limits_{1}^{x}\sum_{n\leq
t}h_{n}^{\left(  r\right)  }\frac{d}{dt}\left(  t^{u}\log^{m}t\right)  dt.
\]
We now use the equality
\[
\frac{d}{dt}\left(  t^{u}\log^{m}t\right)  =\frac{d^{m}}{du^{m}}\left(
ut^{u-1}\right)
\]
to see that
\begin{align*}
S  &  =x^{u}\log^{m}x\sum_{n\leq x}h_{n}^{\left(  r\right)  }-\frac{d^{m}%
}{du^{m}}u\int\limits_{1}^{x}\sum_{n\leq t}h_{n}^{\left(  r\right)  }%
t^{u-1}dt\\
&  =x^{u}\log^{m}x\left(  \frac{1}{\Gamma\left(  r+1\right)  }x^{r}\log
x-\frac{\psi\left(  r+1\right)  }{\Gamma\left(  r+1\right)  }x^{r}+O\left(
x^{r-1}\log x\right)  \right) \\
&  -\frac{d^{m}}{du^{m}}u\int\limits_{1}^{x}\left(  E\left(  t\right)
+\frac{1}{\Gamma\left(  r+1\right)  }t^{r}\log t-\frac{\psi\left(  r+1\right)
}{\Gamma\left(  r+1\right)  }t^{r}\right)  t^{u-1}dt\\
&  =\frac{1}{\Gamma\left(  r+1\right)  }\left(  x^{u+r}\log^{m+1}x-\frac
{d^{m}}{du^{m}}u\int\limits_{1}^{x}t^{r+u-1}\log tdt\right) \\
&  -\frac{\psi\left(  r+1\right)  }{\Gamma\left(  r+1\right)  }\left(
x^{u+r}\log^{m}x-\frac{d^{m}}{du^{m}}u\int\limits_{1}^{x}t^{r+u-1}dt\right) \\
&  -\frac{d^{m}}{du^{m}}u\int\limits_{1}^{x}E\left(  t\right)  t^{u-1}%
dt+O\left(  x^{r-1+u}\log^{m+1}x\right)  .
\end{align*}
Here%
\begin{align*}
x^{u+r}\log^{m}x-\frac{d^{m}}{du^{m}}u\int\limits_{1}^{x}t^{r+u-1}dt  &
=\frac{d^{m}}{du^{m}}\left(  x^{u+r}-1-u\int\limits_{1}^{x}t^{r+u-1}dt\right)
+\frac{d^{m}}{du^{m}}1\\
&  =\frac{d^{m}}{du^{m}}\left(  r\int\limits_{1}^{x}t^{r+u-1}dt\right)
+\frac{d^{m}}{du^{m}}1\\
&  =r\int\limits_{1}^{x}t^{r+u-1}\log^{m}tdt+\frac{d^{m}}{du^{m}}1.
\end{align*}
Since%
\[
\int\limits_{1}^{x}t^{r+u-1}\log tdt=\frac{x^{u+r}\log x}{u+r}-\frac{x^{u+r}%
}{\left(  u+r\right)  ^{2}}+\frac{1}{\left(  u+r\right)  ^{2}},
\]
we obtain%
\[
x^{u+r}\log x-u\int\limits_{1}^{x}t^{r+u-1}\log tdt=r\int\limits_{1}%
^{x}t^{r+u-1}\log tdt+\frac{x^{u+r}}{u+r}-\frac{1}{u+r}.
\]
Hence,%
\begin{align*}
x^{u+r}\log^{m+1}x-\frac{d^{m}}{du^{m}}u\int\limits_{1}^{x}t^{r+u-1}\log tdt
&  =\frac{d^{m}}{du^{m}}\left(  x^{u+r}\log x-u\int\limits_{1}^{x}%
t^{r+u-1}\log tdt\right) \\
&  =\frac{d^{m}}{du^{m}}\left(  r\int\limits_{1}^{x}t^{r+u-1}\log
tdt+\int\limits_{1}^{x}t^{r+u-1}dt\right) \\
&  =r\int\limits_{1}^{x}t^{r+u-1}\log^{m+1}tdt+\int\limits_{1}^{x}%
t^{r+u-1}\log^{m}tdt.
\end{align*}
These complete the proof.
\end{proof}

Now we are ready to complete the proof of Theorem \ref{mteo2}.

\begin{proof}
[Proof of Theorem \ref{mteo2}]From Lemma \ref{lem3} we have
\[
f\left(  r\right)  =\frac{\psi\left(  r+1\right)  }{\Gamma\left(  r+1\right)
}+a_{0}\text{ and }a_{m}=\frac{f^{\left(  m\right)  }\left(  r\right)  }%
{m!}\text{ for }m>0.
\]
Setting $u=-r$ in Theorem \ref{teo1} yields
\[
a_{m}=\frac{f^{\left(  m\right)  }\left(  r\right)  }{m!}=\frac{\left(
-1\right)  ^{m}}{m!}\lim_{x\rightarrow\infty}\left(  \sum_{n\leq x}\frac
{h_{n}^{\left(  r\right)  }\log^{m}n}{n^{r}}-a_{-2}\frac{\ln^{m+2}x}%
{m+2}-a_{-1}\frac{\ln^{m+1}x}{m+1}\right)  ,
\]
which gives the desired result.
\end{proof}

\section{Alternative representations for hyperharmonic Stieltjes
constants\label{sec-ev}}

Recall that $\gamma_{h^{\left(  r\right)  }}\left(  m\right)  $\ reduces to
$\gamma\left(  m\right)  $ for $r=0$. Now we are going to analyze the case
$r>0$ in the following theorems.

\begin{theorem}
\label{teorc}Let $m$ be a non-negative integer and $r$ be a positive integer.
Then,%
\[
\gamma_{h^{\left(  r+1\right)  }}\left(  m\right)  =\frac{1}{r}\gamma
_{h^{\left(  r\right)  }}\left(  m\right)  -\frac{1}{r\Gamma\left(
r+1\right)  }\gamma\left(  m\right)   +\zeta_{h^{\left(  r\right)  }}^{\left(
m\right)  }\left(  r+1\right)  -\frac{1}{r\Gamma\left(  r+1\right)  }%
\sum\limits_{j=0}^{r-1}
\genfrac{[}{]}{0pt}{}{r}{j}
\zeta^{\left(  m\right)  }\left(  r+1-j\right)  ,
\]
where $\zeta_{h^{\left(  r\right)  }}^{\left(  m\right)  }\left(  r+1\right)
=\left.  \dfrac{d^{m}}{ds^{m}}\zeta_{h^{\left(  r\right)  }}\left(  s\right)
\right\vert _{s=r+1}$ and $\zeta^{\left(  m\right)  }\left(  r\right)
=\left.  \dfrac{d^{m}}{ds^{m}}\zeta\left(  s\right)  \right\vert _{s=r}$.
\end{theorem}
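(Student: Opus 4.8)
The plan is to exploit the recurrence $h_n^{(r+1)} = \sum_{k=1}^n h_k^{(r)}$ together with the explicit formula \eqref{2} for $h_n^{(r)}$, reducing $\gamma_{h^{(r+1)}}(m)$ to the already-established limit representation \eqref{GSC} for $\gamma_{h^{(r)}}(m)$. The key observation is that the recurrence relation among hyperharmonic numbers induces a relation among the partial sums appearing in the defining limits. Starting from the definition of $\gamma_{h^{(r+1)}}(m)$ as the limit of $\sum_{n\le x} h_n^{(r+1)} \log^m n / n^{r+1}$ minus the two log-power correction terms (with $a_{-2}=1/\Gamma(r+1)$ and $a_{-1}=-\psi(r+1)/\Gamma(r+1)$), I would substitute the explicit form \eqref{2} of $h_n^{(r+1)}$, namely
\[
h_n^{(r+1)} = \frac{n^{\overline{r+1}}}{n\,\Gamma(r+1)}\bigl(H_{n+r}-\psi(r+1)-\gamma\bigr),
\]
and use the expansion \eqref{6} of the rising factorial $n^{\overline{r+1}}$ in terms of Stirling numbers of the first kind to split the sum into pieces of the form $\sum_{n\le x} n^{j-r-1} H_{n+r} \log^m n$ and $\sum_{n\le x} n^{j-r-1}\log^m n$ for $0\le j\le r+1$.

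First I would isolate the dominant term $j=r+1$ (where $\genfrac{[}{]}{0pt}{}{r+1}{r+1}=1$), which produces the main asymptotic $\frac{1}{\Gamma(r+1)}\sum_{n\le x} \frac{H_{n+r}-\psi(r+1)-\gamma}{n}\log^m n$; this is where the two subtracted log-power terms in the definition of $\gamma_{h^{(r+1)}}(m)$ get absorbed, since $H_{n+r}\sim \log n$ recovers the $\log^{m+2}x/(m+2)$ behavior. The remaining Stirling terms with $j\le r$ contribute convergent Dirichlet series that evaluate at $s=r+1$ to derivatives of $\zeta$ and of $\zeta_{h^{(r)}}$. Concretely, I expect the term $\sum_{n\le x} n^{j-r-1}\log^m n$ to generate (up to sign) $\zeta^{(m)}(r+1-j)$, matching the sum $-\frac{1}{r\Gamma(r+1)}\sum_{j=0}^{r-1}\genfrac{[}{]}{0pt}{}{r}{j}\zeta^{(m)}(r+1-j)$ in the statement, after one reconciles the Stirling recurrence $\genfrac{[}{]}{0pt}{}{r+1}{j}=\genfrac{[}{]}{0pt}{}{r}{j-1}+r\genfrac{[}{]}{0pt}{}{r}{j}$.

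The cleanest route to the factor $1/r$ and the term $\frac{1}{r}\gamma_{h^{(r)}}(m)$ is probably not to expand blindly but to relate $\gamma_{h^{(r+1)}}(m)$ directly to $\gamma_{h^{(r)}}(m)$ via the identity $h_n^{(r+1)} = \frac{n+r}{r}\,h_n^{(r)}$ (a standard consequence of the definition), which converts $\sum_{n\le x} h_n^{(r+1)}\log^m n/n^{r+1}$ into $\frac{1}{r}\sum_{n\le x} h_n^{(r)}\log^m n/n^{r} + \frac{1}{r}\sum_{n\le x} h_n^{(r)}\log^m n/n^{r+1}$. The first sum, after matching correction terms, yields $\frac{1}{r}\gamma_{h^{(r)}}(m)$; the second is a convergent series whose limit is $\frac{1}{r}\zeta_{h^{(r)}}^{(m)}(r+1)$ up to sign conventions on the $m$-th derivative. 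The leftover discrepancy between the correction terms for level $r+1$ and $1/r$ times those for level $r$ must then be shown to equal $-\frac{1}{r\Gamma(r+1)}\gamma(m) - \frac{1}{r\Gamma(r+1)}\sum_{j=0}^{r-1}\genfrac{[}{]}{0pt}{}{r}{j}\zeta^{(m)}(r+1-j)$.

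I expect the main obstacle to be the careful bookkeeping of the correction terms: the subtracted quantities $a_{-2}\log^{m+2}x/(m+2)$ and $a_{-1}\log^{m+1}x/(m+1)$ differ between levels $r$ and $r+1$ (both $\Gamma(r+1)$ versus $\Gamma(r)$ and $\psi(r+1)$ versus $\psi(r)$ change), so when I write $\gamma_{h^{(r+1)}}(m)$ in terms of $\frac{1}{r}\gamma_{h^{(r)}}(m)$ the two sets of correction terms will not cancel cleanly, and the residual divergent pieces must be reorganized using $\psi(r+1)=\psi(r)+1/r$ and $\Gamma(r+1)=r\Gamma(r)$. Tracking exactly how the leftover $\log^{m+1}x$ and $\log^{m+2}x$ terms combine—and verifying that the finite remainder is precisely the Euler–Mascheroni contribution $-\gamma(m)/(r\Gamma(r+1))$ plus the Stirling-weighted zeta derivatives—will be the delicate computation. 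Throughout I would rely on Theorem \ref{teo1} (with suitable choice of $u$) to legitimize exchanging the finite-sum asymptotics with the analytic continuation, so that each convergent Dirichlet series is correctly identified with the corresponding $m$-th derivative of $\zeta$ or $\zeta_{h^{(r)}}$ evaluated at $r+1$.
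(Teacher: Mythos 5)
Your second paragraph correctly identifies the strategy the paper actually uses: write $h_n^{(r+1)}$ in terms of $h_n^{(r)}$, substitute into the limit representation (\ref{GSC}) at level $r+1$, and reorganize via $\Gamma(r+1)=r\Gamma(r)$ and $\psi(r+1)=\psi(r)+1/r$. But the identity you base this on, $h_n^{(r+1)}=\frac{n+r}{r}h_n^{(r)}$, is false. The correct relation (Proposition 3 of \cite{DM}, which is what the paper invokes) is
\[
h_{n}^{(r+1)}=\Bigl(1+\frac{n}{r}\Bigr)h_{n}^{(r)}-\frac{n}{r\left(  n+r\right)  }\binom{n+r}{r}
=\Bigl(1+\frac{n}{r}\Bigr)h_{n}^{(r)}-\frac{1}{r\Gamma\left(  r+1\right)  }\sum_{j=0}^{r}\genfrac{[}{]}{0pt}{}{r}{j}n^{j};
\]
already for $r=1$ one has $h_n^{(2)}=(n+1)H_n-n$, not $(n+1)H_n$.

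This is not a bookkeeping detail that can be recovered later: the omitted polynomial term is the sole source of two of the four terms in the statement. After dividing by $n^{r+1}$, its $j=r$ part contributes $-\frac{1}{r\Gamma(r+1)}\sum_{n\le x}\frac{\log^m n}{n}$, which is exactly what cancels the divergent residue $\frac{1}{r\Gamma(r+1)}\frac{\ln^{m+1}x}{m+1}$ left over when you compare the level-$(r+1)$ correction terms with $1/r$ times the level-$r$ ones (that residue comes from $\psi(r+1)-\psi(r)=1/r$); by (\ref{SC}) this pairing produces $-\frac{1}{r\Gamma(r+1)}\gamma(m)$. The $j\le r-1$ parts are convergent and give $-\frac{1}{r\Gamma(r+1)}\sum_{j=0}^{r-1}\genfrac{[}{]}{0pt}{}{r}{j}\zeta^{(m)}(r+1-j)$. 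With your identity as stated, the ``leftover discrepancy'' is the divergent $\ln^{m+1}x$ term, so the limit you are manipulating does not converge to the claimed constants. Once the recurrence is corrected, the rest of your outline --- the piece $\frac1r\sum_{n\le x} h_n^{(r)}\log^m n/n^r$ absorbing the rescaled corrections to give $\frac1r\gamma_{h^{(r)}}(m)$, and $\sum_{n\le x} h_n^{(r)}\log^m n/n^{r+1}$ giving $\zeta_{h^{(r)}}^{(m)}(r+1)$ up to the sign convention on the $m$-th derivative --- is precisely the paper's proof; the Stirling expansion of (\ref{2}) from your first paragraph and the appeal to Theorem \ref{teo1} are not needed.
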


\begin{proof}
We employ the following equation \cite[Proposition 3]{DM}
\begin{align*}
h_{n}^{\left(  r+1\right)  }  &  =\left(  1+\frac{n}{r}\right)  h_{n}^{\left(
r\right)  }-\frac{n}{r\left(  n+r\right)  }\binom{n+r}{r}\\
&  =\left(  1+\frac{n}{r}\right)  h_{n}^{\left(  r\right)  }-\frac{1}%
{r\Gamma\left(  r+1\right)  }\sum\limits_{j=0}^{r}%
\genfrac{[}{]}{0pt}{}{r}{j}%
n^{j}%
\end{align*}
in (\ref{GSC}). After some manipulations, we deduce that%
\begin{align*}
\gamma_{h^{\left(  r+1\right)  }}\left(  m\right)   &  =\lim_{x\rightarrow
\infty}\left(  \frac{1}{r}\sum_{n\leq x}\frac{h_{n}^{\left(  r\right)  }%
\log^{m}n}{n^{r}}-\frac{1}{r\Gamma\left(  r\right)  }\frac{\ln^{m+2}x}%
{m+2}+\frac{\psi\left(  r\right)  }{r\Gamma\left(  r\right)  }\frac{\ln
^{m+1}x}{m+1}\right. \\
&  -\frac{1}{r\Gamma\left(  r+1\right)  }\sum_{n\leq x}\frac{\log^{m}n}%
{n}+\frac{1}{r\Gamma\left(  r+1\right)  }\frac{\ln^{m+1}x}{m+1}\\
&  \left.  +\sum_{n\leq x}\frac{h_{n}^{\left(  r\right)  }\log^{m}n}{n^{r+1}%
}-\frac{1}{r\Gamma\left(  r+1\right)  }\sum\limits_{j=0}^{r-1}%
\genfrac{[}{]}{0pt}{}{r}{j}%
\sum_{n\leq x}\frac{\log^{m}n}{n^{r+1-j}}\right)  ,
\end{align*}
which is the desired result.
\end{proof}

Now we are going to give a representation for the constants $\gamma
_{h^{\left(  r\right)  }}\left(  m\right)  $\ in terms of Stieltjes constants
$\gamma_{H}\left(  m\right)  ,$ $\gamma\left(  m\right)  $ and some other
special values related to the Riemann zeta function. For this purpose we need
the following lemma:

\begin{lemma}
\label{Top}Let $m$ and $r$ be non-negative integers. Then,%
\begin{align}
\sum_{k=1}^{n}\frac{h_{k}^{\left(  r\right)  }}{k^{r}}\ln^{m}k  &  =\frac
{1}{\Gamma\left(  r\right)  }\left(  \sum_{k=1}^{n}\frac{H_{k}}{k}\ln
^{m}k-\sum_{j=1}^{r-1}\frac{1}{j}\sum_{k=1}^{n+j}\frac{\ln^{m}k}{k}\right)
+\delta\left(  r\right)  \sum_{k=1}^{n}\frac{\ln^{m}k}{k}\nonumber\\
&  +\frac{1}{\Gamma\left(  r\right)  }\sum_{j=0}^{r-1}%
\genfrac{[}{]}{0pt}{}{r}{j}%
\left(  \sum_{k=1}^{n}\frac{H_{k+r-1}}{k^{r+1-j}}\ln^{m}k-H_{r-1}\sum
_{k=1}^{n}\frac{\ln^{m}k}{k^{r+1-j}}\right) \nonumber\\
&  +\frac{1}{\Gamma\left(  r\right)  }\sum_{j=1}^{r-1}\frac{1}{j}\left(
\sum_{k=1}^{j}\frac{\ln^{m}k}{k}-\sum_{k=j+1}^{n+j}\frac{\ln^{m}\left(
1-j/k\right)  }{k}\right)  , \label{top}%
\end{align}
where%
\[
\delta\left(  r\right)  =\frac{1}{\Gamma\left(  r\right)  }\sum_{j=1}%
^{r-1}\frac{1}{j}-\frac{\psi\left(  r\right)  +\gamma}{\Gamma\left(  r\right)
}=\left\{
\begin{array}
[c]{cc}%
1, & r=0,\\
0, & r>0.
\end{array}
\right.
\]

\end{lemma}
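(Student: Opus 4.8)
The plan is to reduce everything to the explicit form of $h_k^{(r)}$ and then rearrange finite sums. Combining (\ref{2}) with $\psi(r)+\gamma=H_{r-1}$ gives $h_k^{(r)}=\frac{k^{\overline{r}}}{k\,\Gamma(r)}\left(H_{k+r-1}-H_{r-1}\right)$, and expanding the rising factorial by (\ref{6}) yields
\[
\frac{h_k^{(r)}}{k^{r}}\ln^{m}k=\frac{1}{\Gamma(r)}\sum_{j=0}^{r}\genfrac{[}{]}{0pt}{}{r}{j}\frac{\left(H_{k+r-1}-H_{r-1}\right)\ln^{m}k}{k^{r+1-j}}.
\]
Summing over $k\le n$ and separating the top index $j=r$ (where $\genfrac{[}{]}{0pt}{}{r}{r}=1$ and $k^{r+1-j}=k$) splits the left-hand side into a piece weighted by $1/k$ and a remainder over $0\le j\le r-1$. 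Writing $H_{k+r-1}-H_{r-1}$ out, this remainder is already exactly the double sum on the second line of (\ref{top}), so no further work is needed there. The degenerate cases $r=0$ and $r=1$ are covered by the conventions $1/\Gamma(r)\to0$ with $\delta(0)=1$, and by $h_k^{(1)}=H_k$, so the real content lies in $r\ge2$.

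The heart of the argument is the $j=r$ piece $A=\frac{1}{\Gamma(r)}\sum_{k\le n}\frac{\left(H_{k+r-1}-H_{r-1}\right)\ln^{m}k}{k}$. I would first use the elementary decomposition $H_{k+r-1}=H_k+\sum_{i=1}^{r-1}\frac{1}{k+i}$ to peel off the principal term $\frac{1}{\Gamma(r)}\sum_{k\le n}\frac{H_k\ln^{m}k}{k}$, the leading summand on the right of (\ref{top}). The leftover $-\frac{H_{r-1}}{\Gamma(r)}\sum_{k\le n}\frac{\ln^{m}k}{k}$ is then combined with the cross term $\frac{1}{\Gamma(r)}\sum_{i=1}^{r-1}\sum_{k\le n}\frac{\ln^{m}k}{k(k+i)}$, which I would break up by the partial fraction $\frac{1}{k(k+i)}=\frac{1}{i}\left(\frac{1}{k}-\frac{1}{k+i}\right)$. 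Since $\sum_{i=1}^{r-1}\frac{1}{i}=H_{r-1}$, the diagonal $1/k$ contributions cancel the $-\frac{H_{r-1}}{\Gamma(r)}\sum_{k\le n}\frac{\ln^{m}k}{k}$ term exactly. This cancellation is precisely the origin of $\delta(r)$: it rests on $\sum_{i=1}^{r-1}\frac{1}{i}=H_{r-1}=\psi(r)+\gamma$, so that $\delta(r)$ vanishes for $r>0$ and only survives when $r=0$.

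What remains is to rewrite each shifted sum $\sum_{k\le n}\frac{\ln^{m}k}{k+i}$ produced by the partial fractions. Re-indexing $k\mapsto k-i$ turns it into $\sum_{k=i+1}^{n+i}\frac{\ln^{m}(k-i)}{k}$; on this range $k>i$, so I would apply $\ln(k-i)=\ln k+\ln(1-i/k)$ and isolate the clean power $\ln^{m}k$, whose range I then extend via $\sum_{k=i+1}^{n+i}\frac{\ln^{m}k}{k}=\sum_{k=1}^{n+i}\frac{\ln^{m}k}{k}-\sum_{k=1}^{i}\frac{\ln^{m}k}{k}$. Summed against $\frac{1}{\Gamma(r)}\sum_{i=1}^{r-1}\frac{1}{i}$, these assemble into the shifted sums $-\frac{1}{\Gamma(r)}\sum_{j=1}^{r-1}\frac{1}{j}\sum_{k=1}^{n+j}\frac{\ln^{m}k}{k}$ and $+\frac{1}{\Gamma(r)}\sum_{j=1}^{r-1}\frac{1}{j}\sum_{k=1}^{j}\frac{\ln^{m}k}{k}$, while the contributions carrying $\ln(1-i/k)$ produce the final line of (\ref{top}).

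I expect the logarithmic splitting in this last step to be the main obstacle. Tracking which terms fall in the range $1\le k\le i$ versus $i+1\le k\le n+i$ after each reindexing is where slips are easiest, but the genuinely delicate point is the passage from $\ln^{m}(k-i)$ to the $\ln^{m}k$ and $\ln^{m}(1-i/k)$ pieces appearing in (\ref{top}): since $\ln^{m}(k-i)=\left(\ln k+\ln(1-i/k)\right)^{m}$, this separation is immediate for $m\le1$, and for larger $m$ the binomial cross terms are exactly what I would scrutinize most carefully before certifying the stated form. Once that expansion is settled, the rest is a routine rearrangement of absolutely convergent finite sums.
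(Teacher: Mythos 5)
Your proposal reproduces the paper's own proof essentially step for step: the same expansion of $h_k^{(r)}$ via (\ref{2}) and (\ref{6}), the same separation of the top Stirling index $j=r$, the same decomposition $H_{k+r-1}=H_k+\sum_{i=1}^{r-1}\frac{1}{k+i}$ with partial fractions producing the cancellation that accounts for $\delta(r)$, and the same reindexing $k\mapsto k-i$ of the shifted sums. Your treatment of the degenerate cases $r=0,1$ is also fine.

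The step you single out for scrutiny is precisely where the paper's proof, and the lemma as stated, break down. In passing from $\sum_{k=j+1}^{n+j}\ln^{m}(k-j)/(jk)$ to the displayed form, the paper uses $\ln^{m}(k-j)=\ln^{m}k+\ln^{m}(1-j/k)$, which is an identity only for $m=1$. For $m=0$ it reads $1=2$ (and, consistently, the constant $C(j,0)$ of Theorem \ref{teoghr} would diverge); for $m\ge 2$ the binomial cross terms $\binom{m}{i}\ln^{m-i}k\,\ln^{i}(1-j/k)$ with $1\le i\le m-1$ are dropped, and each contributes a convergent but generally nonzero sum. A direct check at $r=2$, $m=0$, $n=2$ gives $13/8$ for the left side of (\ref{top}) but $19/24$ for the right side, the gap being exactly the spurious term $-\sum_{k=2}^{3}\ln^{0}(1-1/k)/k=-5/6$. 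So your refusal to certify the final form is vindicated; the one correction to your own wording is that the separation is not ``immediate for $m\le 1$'' but valid only for $m=1$ (for $m=0$ the correct last line contains no $\ln^{0}(1-j/k)$ sum at all). The lemma, and consequently the definition of $C(j,m)$ in Theorem \ref{teoghr}, should be repaired by replacing $\ln^{m}(1-j/k)$ in the final sum with $\sum_{i=1}^{m}\binom{m}{i}\ln^{m-i}k\,\ln^{i}(1-j/k)$; with that amendment your argument (and the paper's) goes through for all $m\ge 0$.
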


\begin{proof}
In the light of (\ref{6}) and (\ref{2}) we have%
\[
h_{k}^{\left(  r\right)  }=\frac{1}{\Gamma\left(  r\right)  }\sum_{j=0}^{r}%
\genfrac{[}{]}{0pt}{}{r}{j}%
k^{j-1}\left(  H_{k+r-1}-\psi\left(  r\right)  -\gamma\right)  ,
\]
from which we obtain that%
\begin{align*}
\Gamma\left(  r\right)  \sum_{k=1}^{n}\frac{h_{k}^{\left(  r\right)  }}%
{k^{s}}  &  =\sum_{j=0}^{r-1}%
\genfrac{[}{]}{0pt}{}{r}{j}%
\sum_{k=1}^{n}\frac{H_{k+r-1}}{k^{s+1-j}}+\sum_{k=1}^{n}\frac{H_{k+r-1}%
}{k^{s+1-r}}\\
&  -\left(  \psi\left(  r\right)  +\gamma\right)  \left(  \sum_{k=1}^{n}%
\frac{1}{k^{s+1-r}}+\sum_{j=0}^{r-1}%
\genfrac{[}{]}{0pt}{}{r}{j}%
\sum_{k=1}^{n}\frac{1}{k^{s+1-j}}\right)  .
\end{align*}
Differentiating both sides $m$ times with respect to $s$ at $s=r$ gives%
\begin{align}
\Gamma\left(  r\right)  \sum_{k=1}^{n}\frac{h_{k}^{\left(  r\right)  }}{k^{s}%
}\ln^{m}k  &  =\sum_{j=0}^{r-1}%
\genfrac{[}{]}{0pt}{}{r}{j}%
\sum_{k=1}^{n}\frac{H_{k+r-1}}{k^{r+1-j}}\ln^{m}k+\sum_{k=1}^{n}%
\frac{H_{k+r-1}}{k}\ln^{m}k\nonumber\\
&  -\left(  \psi\left(  r\right)  +\gamma\right)  \left(  \sum_{k=1}^{n}%
\frac{\ln^{m}k}{k}+\sum_{j=0}^{r-1}%
\genfrac{[}{]}{0pt}{}{r}{j}%
\sum_{k=1}^{n}\frac{\ln^{m}k}{k^{r+1-j}}\right)  . \label{4}%
\end{align}
It is easy to see that
\begin{equation}
\sum_{k=1}^{n}\frac{H_{k+r-1}}{k}\ln^{m}k=\sum_{k=1}^{n}\frac{H_{k}}{k}\ln
^{m}k+\sum_{j=1}^{r-1}\frac{1}{j}\sum_{k=1}^{n}\frac{\ln^{m}k}{k}-\sum
_{k=1}^{n}\sum_{j=1}^{r-1}\frac{\ln^{m}k}{j\left(  k+j\right)  }\nonumber
\end{equation}
and
\begin{align*}
\sum_{k=1}^{n}\sum_{j=1}^{r-1}\frac{\ln^{m}k}{j\left(  k+j\right)  }  &
=\sum_{j=1}^{r-1}\sum_{k=j+1}^{n+j}\frac{\ln^{m}k+\ln^{m}\left(  1-j/k\right)
}{jk}\\
&  =\sum_{j=1}^{r-1}\frac{1}{j}\left(  \sum_{k=1}^{n+j}\frac{\ln^{m}k}{k}%
-\sum_{k=1}^{j}\frac{\ln^{m}k}{k}+\sum_{k=j+1}^{n+j}\frac{\ln^{m}\left(
1-j/k\right)  }{k}\right)  .
\end{align*}
Then, we have%
\begin{align}
\sum_{k=1}^{n}\frac{H_{k+r-1}}{k}\ln^{m}k  &  =\sum_{k=1}^{n}\frac{H_{k}}%
{k}\ln^{m}k+H_{r-1}\sum_{k=1}^{n}\frac{\ln^{m}k}{k}-\sum_{j=1}^{r-1}\frac
{1}{j}\sum_{k=1}^{n+j}\frac{\ln^{m}k}{k}\nonumber\\
&  +\sum_{j=1}^{r-1}\frac{1}{j}\sum_{k=1}^{j}\frac{\ln^{m}k}{k}-\sum
_{j=1}^{r-1}\sum_{k=j+1}^{n+j}\frac{\ln^{m}\left(  1-j/k\right)  }{jk}
\label{top1}%
\end{align}
Hence, (\ref{top}) follows from (\ref{4}) and (\ref{top1}).
\end{proof}

We are ready to give the aforementioned representation for the constants
$\gamma_{h^{\left(  r\right)  }}\left(  m\right)  .$

\begin{theorem}
\label{teoghr}Let $m$ and $r$ be positive integers. Then,%
\begin{align*}
\Gamma\left(  r\right)  \gamma_{h^{\left(  r\right)  }}\left(  m\right)   &
=\gamma_{H}\left(  m\right)  -H_{r-1}\gamma\left(  m\right) \\
&  +\sum_{j=0}^{r-1}
\genfrac{[}{]}{0pt}{}{r}{j}
\left(  \widetilde{\zeta}_{H}^{\left(  m\right)  }\left(  r+1-j,r-1\right)
-H_{r-1}\zeta^{\left(  m\right)  }\left(  r+1-j\right)  \right)  +\sum
_{j=1}^{r-1}\frac{1}{j}\left(  \sum_{k=1}^{j}\frac{\ln^{m}k}{k}-C\left(
j,m\right)  \right)  ,
\end{align*}
where%
\[
\widetilde{\zeta}_{H}\left(  s,a\right)  =\sum_{k=1}^{\infty}\frac{H_{k+a}%
}{k^{s}}=\sum_{k=a+1}^{\infty}\frac{H_{k}}{\left(  k-a\right)  ^{s}},\text{
}\widetilde{\zeta}_{H}^{\left(  m\right)  }\left(  r,a\right)  =\left.
\dfrac{d^{m}}{ds^{m}}\widetilde{\zeta}_{H}\left(  s,a\right)  \right\vert
_{s=r}%
\]
and the constants $C\left(  j,m\right)  $
\[
C\left(  j,m\right)  =\sum_{k=j+1}^{\infty}\frac{\ln^{m}\left(  1-j/k\right)
}{k}.
\]

\end{theorem}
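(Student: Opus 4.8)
The plan is to derive the identity by passing to the limit $n\to\infty$ in the finite-sum formula of Lemma \ref{Top}, specialized to $r>0$ so that $\delta(r)=0$, after multiplying it through by $\Gamma(r)$, and then matching the outcome term-by-term against the defining limit (\ref{GSC}) for $\gamma_{h^{(r)}}(m)$. Since $m\ge1$ by hypothesis, none of the auxiliary series will produce a logarithmic divergence beyond those already present in (\ref{GSC}).

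First I would record the asymptotics of each block on the right-hand side of (\ref{top}). The defining limit of the harmonic Stieltjes constants gives $\sum_{k\le n}\frac{H_k}{k}\ln^m k=\gamma_H(m)+\frac{\ln^{m+2}n}{m+2}+\gamma\frac{\ln^{m+1}n}{m+1}+o(1)$, and (\ref{SC}) gives $\sum_{k\le N}\frac{\ln^m k}{k}=\gamma(m)+\frac{\ln^{m+1}N}{m+1}+o(1)$. A short estimate shows $\ln^{m+1}(n+j)=\ln^{m+1}n+o(1)$, so each shifted sum $\sum_{k\le n+j}\frac{\ln^m k}{k}$ contributes $\gamma(m)+\frac{\ln^{m+1}n}{m+1}$, and the weights $\tfrac1j$ sum to $H_{r-1}$. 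For $1\le j\le r-1$ the exponents $r+1-j\ge2$ guarantee that the inner sums in the Stirling-weighted block converge to the values $\widetilde{\zeta}_H^{(m)}(r+1-j,r-1)$ and $\zeta^{(m)}(r+1-j)$ of the statement, while the final block converges to $\sum_{k=1}^{j}\frac{\ln^m k}{k}-C(j,m)$, the series defining $C(j,m)$ being convergent precisely because $\ln^m(1-j/k)=O(k^{-m})$ with $m\ge1$.

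It remains to verify that the divergent pieces cancel. Multiplying (\ref{GSC}) by $\Gamma(r)$ shows that $\Gamma(r)\sum_{n\le x}\frac{h_n^{(r)}\log^m n}{n^r}$ carries the divergent part $\frac{\ln^{m+2}x}{m+2}-\psi(r)\frac{\ln^{m+1}x}{m+1}$, whereas the right-hand side accumulates $\frac{\ln^{m+2}n}{m+2}$ from the $\gamma_H$-block and $(\gamma-H_{r-1})\frac{\ln^{m+1}n}{m+1}$ from the remaining logarithmic terms. The $\ln^{m+2}$ parts cancel automatically, and the $\ln^{m+1}$ parts agree exactly by the classical relation $\psi(r)=H_{r-1}-\gamma$; hence the limit exists and equals the stated finite combination. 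The main obstacle is the careful bookkeeping in this cancellation: one must track the $\ln^{m+2}n$ and $\ln^{m+1}n$ growth through every summand of (\ref{top}) --- especially confirming that the index shift $k\le n+j$ introduces no divergence beyond the $H_{r-1}$-weighted logarithm --- since only after the exact cancellation via $\psi(r)=H_{r-1}-\gamma$ does the identification of the surviving convergent sums become routine.
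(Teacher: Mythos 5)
Your proposal is correct and takes essentially the same route as the paper: both start from Lemma \ref{Top} (with $\delta(r)=0$ for $r>0$), identify the convergent blocks with $\widetilde{\zeta}_{H}^{(m)}$, $\zeta^{(m)}$ and $C(j,m)$, and extract $\gamma_{H}(m)-H_{r-1}\gamma(m)$ from the divergent blocks by matching against the defining limits of the Stieltjes constants. The only cosmetic difference is that the paper evaluates the remaining limit by rewriting it through $\gamma_{h^{(1)}}(m)$ and $\gamma_{h^{(0)}}(m)$, whereas you substitute the asymptotic expansions directly and invoke $\psi(r)=H_{r-1}-\gamma$; these are the same computation.
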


\begin{proof}
From Lemma \ref{Top} we have%
\begin{align*}
&  \sum_{k=1}^{n}\frac{h_{k}^{\left(  r\right)  }}{k^{r}}\ln^{m}k-\frac
{1}{\Gamma\left(  r\right)  }\frac{\ln^{m+2}n}{m+2}+\frac{\psi\left(
r\right)  }{\Gamma\left(  r\right)  }\frac{\ln^{m+1}n}{m+1}\\
&  =\frac{1}{\Gamma\left(  r\right)  }\left(  \sum_{k=1}^{n}\frac{H_{k}}{k}%
\ln^{m}k-\sum_{j=1}^{r-1}\frac{1}{j}\sum_{k=1}^{n+j}\frac{\ln^{m}k}{k}%
-\frac{\ln^{m+2}n}{m+2}+\psi\left(  r\right)  \frac{\ln^{m+1}n}{m+1}\right) \\
&  +\frac{1}{\Gamma\left(  r\right)  }\sum_{j=0}^{r-1}%
\genfrac{[}{]}{0pt}{}{r}{j}%
\left(  \sum_{k=1}^{n}\frac{H_{k+r-1}}{k^{r+1-j}}\ln^{m}k-H_{r-1}\sum
_{k=1}^{n}\frac{\ln^{m}k}{k^{r+1-j}}\right) \\
&  +\frac{1}{\Gamma\left(  r\right)  }\sum_{j=1}^{r-1}\frac{1}{j}\left(
\sum_{k=1}^{j}\frac{\ln^{m}k}{k}-\sum_{k=j+1}^{n+j}\frac{\ln^{m}\left(
1-j/k\right)  }{k}\right)  .
\end{align*}
By letting $n$ tend to infinity we see that%
\begin{align*}
\gamma_{h^{\left(  r\right)  }}\left(  m\right)   &  =\frac{1}{\Gamma\left(
r\right)  }\lim_{n\rightarrow\infty}\left(  \sum_{k=1}^{n}\frac{H_{k}}{k}%
\ln^{m}k-\sum_{j=1}^{r-1}\frac{1}{j}\sum_{k=1}^{n+j}\frac{\ln^{m}k}{k}%
-\frac{\ln^{m+2}n}{m+2}+\psi\left(  r\right)  \frac{\ln^{m+1}n}{m+1}\right) \\
&  +\frac{1}{\Gamma\left(  r\right)  }\sum_{j=0}^{r-1}%
\genfrac{[}{]}{0pt}{}{r}{j}%
\left(  \widetilde{\zeta}_{H}^{\left(  m\right)  }\left(  r+1-j,r-1\right)
-H_{r-1}\zeta^{\left(  m\right)  }\left(  r+1-j\right)  \right) \\
&  +\frac{1}{\Gamma\left(  r\right)  }\sum_{j=1}^{r-1}\frac{1}{j}\left(
\sum_{k=1}^{j}\frac{\ln^{m}k}{k}-\sum_{k=j+1}^{\infty}\frac{\ln^{m}\left(
1-j/k\right)  }{k}\right)  .
\end{align*}
Then the desired result follows from%
\begin{align*}
&  \lim_{n\rightarrow\infty}\left(  \sum_{k=1}^{n}\frac{H_{k}}{k}\ln^{m}%
k-\sum_{j=1}^{r-1}\frac{1}{j}\sum_{k=1}^{n+j}\frac{\ln^{m}k}{k}-\frac
{\ln^{m+2}n}{m+2}+\psi\left(  r\right)  \frac{\ln^{m+1}n}{m+1}\right) \\
&  =\lim_{n\rightarrow\infty}\left(  \gamma_{h^{\left(  1\right)  }}\left(
m\right)  +H_{r-1}\frac{\ln^{m+1}n}{m+1}-\sum_{j=1}^{r-1}\frac{1}{j}\sum
_{k=1}^{n+j}\frac{\ln^{m}k}{k}\right) \\
&  =\gamma_{h^{\left(  1\right)  }}\left(  m\right)  -\sum_{j=1}^{r-1}\frac
{1}{j}\lim_{n\rightarrow\infty}\left(  \frac{\ln^{m+1}\left(  1+j/n\right)
}{m+1}+\sum_{k=1}^{n+j}\frac{\ln^{m}k}{k}-\frac{\ln^{m+1}\left(  n+j\right)
}{m+1}\right) \\
&  =\gamma_{h^{\left(  1\right)  }}\left(  m\right)  -H_{r-1}\gamma
_{h^{\left(  0\right)  }}\left(  m\right)  .
\end{align*}

\end{proof}

\section{A formal extension of the Stieltjes constants\label{sec-*}}

In this section, we give a presentation for the constants
\[
\gamma_{h^{\left(  r\right)  }}^{\ast}\left(  m\right)  =\lim_{n\rightarrow
\infty}\left(  \sum_{k=1}^{n}\frac{h_{k}^{\left(  r\right)  }}{k^{r}}\ln
^{m}k-\int_{1}^{n}\frac{h_{x}^{\left(  r\right)  }}{x^{r}}\ln^{m}xdx\right)
,
\]
which are a formal extension of the Stieltjes constants. As a result we show
that integrals involving generalized exponential function can be written in
terms of some special constants.

We first analyze the integral in the limit above.

\begin{lemma}
\label{Int}Let $m$ and $r$ be non-negative integers. Then,%
\begin{align}
\Gamma\left(  r\right)  \int_{1}^{n}\frac{h_{x}^{\left(  r\right)  }}{x^{r}%
}\ln^{m}xdx  & =\frac{\ln^{m+2}n}{m+2}-\psi\left(  r\right)  \frac{\ln^{m+1}%
n}{m+1}+\frac{n^{\overline{r}}}{n^{r+1}}\ln^{m}n\nonumber\\
&  +\int_{1}^{n}\left(
{\displaystyle\sum_{j=0}^{r-1}}
\genfrac{[}{]}{0pt}{0}{r}{j}%
\frac{\ln^{m+1}x-\psi\left(  r\right)  \ln^{m}x}{x^{r+1-j}}+%
{\displaystyle\sum_{j=0}^{r}}
\genfrac{[}{]}{0pt}{0}{r}{j}%
\frac{\left(  r+1\right)  \ln^{m}x-m\ln^{m-1}x}{x^{r+2-j}}\right)
dx\nonumber\\
&  +%
{\displaystyle\sum_{j=0}^{r}}
\genfrac{[}{]}{0pt}{0}{r}{j}%
\int_{0}^{\infty}\left(  \frac{1}{t}-\frac{1}{1-e^{-t}}\right)  \left(
\int_{1}^{n}\frac{\ln^{m}x}{x^{r+1-j}}e^{-xt}dx\right)  dt. \label{int}%
\end{align}

\end{lemma}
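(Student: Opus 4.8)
The plan is to insert the closed form (\ref{hxr}) into the integrand and then replace the digamma function by an integral representation carrying precisely the kernel $\frac1t-\frac1{1-e^{-t}}$ that appears in (\ref{int}). From (\ref{hxr}) we have
\[
\Gamma(r)\,\frac{h_x^{(r)}}{x^r}\ln^m x=\frac{x^{\overline r}}{x^{r+1}}\bigl(\psi(x+r)-\psi(r)\bigr)\ln^m x .
\]
I would rewrite the digamma difference by combining the recurrence $\psi(x+r)=\psi(x)+\sum_{k=0}^{r-1}\frac1{x+k}$ with the Gauss-type formula $\psi(x)=\ln x+\int_0^\infty\bigl(\frac1t-\frac1{1-e^{-t}}\bigr)e^{-xt}\,dt$, obtaining
\[
\psi(x+r)-\psi(r)=\bigl(\ln x-\psi(r)\bigr)+\sum_{k=0}^{r-1}\frac1{x+k}+\int_0^\infty\Bigl(\frac1t-\frac1{1-e^{-t}}\Bigr)e^{-xt}\,dt .
\]
Expanding $x^{\overline r}/x^{r+1}=\sum_{j=0}^r\genfrac{[}{]}{0pt}{}{r}{j}x^{-(r+1-j)}$ through (\ref{6}) then breaks the integrand into three groups, whose integrals over $[1,n]$ I would evaluate separately.

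The group containing the integral representation is matched most directly: after interchanging the $x$- and $t$-integrations it becomes $\sum_{j=0}^r\genfrac{[}{]}{0pt}{}{r}{j}\int_0^\infty(\frac1t-\frac1{1-e^{-t}})\bigl(\int_1^n x^{-(r+1-j)}\ln^m x\,e^{-xt}\,dx\bigr)\,dt$, i.e.\ the final line of (\ref{int}). For the $(\ln x-\psi(r))$ group I would peel off the $j=r$ term, where $\genfrac{[}{]}{0pt}{}{r}{r}=1$ and the power is $x^{-1}$; its integral $\int_1^n\frac{\ln^{m+1}x-\psi(r)\ln^m x}{x}\,dx$ supplies exactly $\frac{\ln^{m+2}n}{m+2}-\psi(r)\frac{\ln^{m+1}n}{m+1}$, and the remaining indices $0\le j\le r-1$ produce the first sum inside the displayed integral.

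The decisive step is the $\sum_{k=0}^{r-1}\frac1{x+k}$ group. Since this sum is the logarithmic derivative of the rising factorial, one has
\[
\frac{x^{\overline r}}{x^{r+1}}\sum_{k=0}^{r-1}\frac1{x+k}=\frac{(x^{\overline r})'}{x^{r+1}} ,
\]
so $\int_1^n\frac{(x^{\overline r})'}{x^{r+1}}\ln^m x\,dx$ can be integrated by parts with $v=x^{\overline r}$. This moves the derivative onto $x^{-(r+1)}\ln^m x$, and the resulting integrand is $x^{\overline r}$ times $((r+1)\ln^m x-m\ln^{m-1}x)/x^{r+2}$, which after re-expanding $x^{\overline r}/x^{r+2}=\sum_{j=0}^r\genfrac{[}{]}{0pt}{}{r}{j}x^{-(r+2-j)}$ is the second sum in (\ref{int}); the boundary evaluation contributes $\frac{n^{\overline r}}{n^{r+1}}\ln^m n$. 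Adding the three groups then gives (\ref{int}).

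I expect the obstacles to be technical rather than conceptual. The Fubini interchange in the first group must be justified, but this is controlled because the kernel is bounded near $t=0$ (tending to $-\tfrac12$) while the inner $x$-integral decays like $e^{-t}$ as $t\to\infty$, so the double integral converges absolutely on $[0,\infty)\times[1,n]$. The other delicate point is the lower limit of the integration by parts: the boundary value $x^{\overline r}\ln^m x/x^{r+1}$ vanishes at $x=1$ when $m\ge1$ because $\ln 1=0$, which is why only the endpoint $n$ appears; the case $m=0$, where $\ln^0 1=1$, should be inspected on its own. Beyond this, the only real effort is the bookkeeping of re-expanding the single Stirling index $j$ against the two powers $x^{r+1-j}$ and $x^{r+2-j}$.
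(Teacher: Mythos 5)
Your proposal is correct and takes essentially the same route as the paper: substitute (\ref{hxr}), split $\psi(x+r)-\psi(r)$ into a logarithm, a rational sum handled by integration by parts against the rising factorial, and the Gauss integral with kernel $\frac1t-\frac1{1-e^{-t}}$, then expand via (\ref{6}). The only cosmetic difference is that the paper peels off $\psi(x+1)$ together with the logarithmic derivative of $(x+1)^{\overline{r-1}}$ where you use $\psi(x)$ and $(x^{\overline{r}})'$; you also rightly flag the $m=0$ boundary contribution $1^{\overline{r}}\ln^0 1=r!$ at $x=1$, a subtlety the paper's own integration by parts silently drops.
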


\begin{proof}
Utilizing the identity $\psi\left(  x+1\right)  =\psi\left(  x\right)  +1/x$
\ in (\ref{hxr}) we have
\begin{align}
\Gamma\left(  r\right)  \int_{1}^{n}\frac{h_{x}^{\left(  r\right)  }}{x^{s}%
}dx  &  =\int_{1}^{n}\frac{x^{\overline{r}}}{x^{s+1}}\psi\left(  x+1\right)
dx\nonumber\\
&  +\int_{1}^{n}\frac{1}{x^{s}}\frac{d}{dx}\left(  x+1\right)  ^{\overline
{r-1}}dx-\psi\left(  r\right)  \int_{1}^{n}\frac{x^{\overline{r}}}{x^{s+1}}dx.
\label{inta}%
\end{align}
In view of (\ref{6})\ the first integral on the RHS of (\ref{inta})\ becomes\
\[
\int_{1}^{n}\frac{x^{\overline{r}}}{x^{s+1}}\psi\left(  x+1\right)  dx=%
{\displaystyle\sum_{j=0}^{r}}
\genfrac{[}{]}{0pt}{0}{r}{j}%
\int_{1}^{n}\frac{\psi\left(  x+1\right)  }{x^{s+1-j}}dx.
\]
Thanks to the expression \cite[p. 26]{SC}
\[
\psi\left(  z\right)  =\ln z+\int_{0}^{\infty}\left(  \frac{1}{t}-\frac
{1}{1-e^{-t}}\right)  e^{-zt}dt,\text{ }\operatorname{Re}\left(  z\right)
>0,
\]
we write
\begin{align*}
\int_{1}^{n}\frac{\psi\left(  x+1\right)  }{x^{s+1-j}}dx  &  =\int_{1}%
^{n}\frac{\ln x}{x^{s+1-j}}dx+\int_{1}^{n}\frac{1}{x^{s+2-j}}dx\\
&  +\int_{0}^{\infty}\left(  \frac{1}{t}-\frac{1}{1-e^{-t}}\right)  \left(
\int_{1}^{n}\frac{e^{-xt}}{x^{s+1-j}}dx\right)  dt.
\end{align*}
We then deduce that
\begin{align*}
\left.  \frac{\partial^{m}}{\partial s^{m}}\int_{1}^{n}\frac{x^{\overline{r}}%
}{x^{s+1}}\psi\left(  x+1\right)  dx\right\vert _{s=r}  & = {\displaystyle\sum
_{j=0}^{r}}
\genfrac{[}{]}{0pt}{0}{r}{j}
\int_{1}^{n}\frac{\psi\left(  x+1\right)  }{x^{r+1-j}}\ln^{m}xdx\\
&  =\int_{1}^{n}\frac{\ln^{m+1}x}{x}dx+ {\displaystyle\sum_{j=0}^{r-1}}
\genfrac{[}{]}{0pt}{0}{r}{j}%
\int_{1}^{n}\frac{\ln^{m+1}x}{x^{r+1-j}}dx+ {\displaystyle\sum_{j=0}^{r}}
\genfrac{[}{]}{0pt}{0}{r}{j}
\int_{1}^{n}\frac{\ln^{m}x}{x^{r+2-j}}dx\\
&  +{\displaystyle\sum_{j=0}^{r}}
\genfrac{[}{]}{0pt}{0}{r}{j}
\int_{0}^{\infty}\left(  \frac{1}{t}-\frac{1}{1-e^{-t}}\right)  \left(
\int_{1}^{n}\frac{\ln^{m}x}{x^{r+1-j}}e^{-xt}dx\right)  dt.
\end{align*}
On the other hand, the second and the third integrals on the RHS of
(\ref{inta}) can be obtained as
\begin{align*}
\left.  \frac{\partial^{m}}{\partial s^{m}}\int_{1}^{n}\frac{1}{x^{s}}\frac
{d}{dx}\left(  x+1\right)  ^{\overline{r-1}}dx\right\vert _{s=r}  &  =\int%
_{1}^{n}\frac{\ln^{m}x}{x^{r}}\frac{d}{dx}\left(  x+1\right)  ^{\overline
{r-1}}dx\\
&  =\left(  n+1\right)  ^{\overline{r-1}}\frac{\ln^{m}n}{n^{r}}-%
{\displaystyle\sum_{j=0}^{r}}
\genfrac{[}{]}{0pt}{0}{r}{j}%
\int_{1}^{n}\frac{\left(  -r\ln^{m}x+m\ln^{m-1}x\right)  }{x^{r+2-j}}dx
\end{align*}
and%
\[
\left.  \psi\left(  r\right)  \frac{\partial^{m}}{\partial s^{m}}\int_{1}%
^{n}\frac{x^{\overline{r}}}{x^{s+1}}dx\right\vert _{s=r}=\psi\left(  r\right)
\int_{1}^{n}\frac{\ln^{m}x}{x}dx+\psi\left(  r\right)
{\displaystyle\sum_{j=0}^{r-1}}
\genfrac{[}{]}{0pt}{0}{r}{j}%
\int_{1}^{n}\frac{\ln^{m}x}{x^{r+1-j}}dx.
\]
Combining the results above yields (\ref{int}).
\end{proof}

The next theorem gives the aforementioned representation for the constants
$\gamma_{h^{\left(  r\right)  }}^{\ast}\left(  m\right)  $, which involves the
generalized integro-exponential function (e.g. \cite{Mil})
\[
E_{s}^{m}\left(  t\right)  =\frac{1}{\Gamma\left(  m+1\right)  }\int%
_{1}^{\infty}\frac{e^{-xt}}{x^{s}}\ln^{m}xdx
\]
(see \cite{Mi,Hu} for different fields where the integral arises).

\begin{theorem}
\label{teomod}Let $m$ and $r$ be non-negative integers. Then,%
\begin{align*}
\gamma_{h^{\left(  r\right)  }}^{\ast}\left(  m\right)   &  =\gamma
_{h^{\left(  r\right)  }}\left(  m\right)  -\frac{m!}{\Gamma\left(  r\right)
}%
{\displaystyle\sum_{j=0}^{r-1}}
\genfrac{[}{]}{0pt}{0}{r}{j}%
\left(  \frac{m+1-\left(  r-j\right)  \psi\left(  r\right)  }{\left(
r-j\right)  ^{m+2}}+\frac{j}{\left(  r+1-j\right)  ^{m+1}}\right) \\
&  -r\frac{m!}{\Gamma\left(  r\right)  }-\frac{m!}{\Gamma\left(  r\right)  }%
{\displaystyle\sum_{j=0}^{r}}
\genfrac{[}{]}{0pt}{0}{r}{j}%
\int_{0}^{\infty}\left(  \frac{1}{t}-\frac{1}{1-e^{-t}}\right)  E_{r+1-j}%
^{m}\left(  t\right)  dt.
\end{align*}

\end{theorem}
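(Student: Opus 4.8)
The plan is to compare the partial sum and the integral appearing in the definition of $\gamma_{h^{(r)}}^{\ast}(m)$ against their respective asymptotic expansions, so that the divergent logarithmic pieces cancel and only a convergent remainder survives. For the sum, Theorem \ref{mteo2} (equivalently (\ref{GSC})) gives
\[
\sum_{k=1}^{n}\frac{h_{k}^{(r)}}{k^{r}}\ln^{m}k=\gamma_{h^{(r)}}(m)+\frac{1}{\Gamma(r)}\frac{\ln^{m+2}n}{m+2}-\frac{\psi(r)}{\Gamma(r)}\frac{\ln^{m+1}n}{m+1}+o(1),
\]
while dividing the identity of Lemma \ref{Int} by $\Gamma(r)$ expresses $\int_{1}^{n}h_{x}^{(r)}x^{-r}\ln^{m}x\,dx$ as the \emph{same} two divergent terms $\frac{1}{\Gamma(r)}\frac{\ln^{m+2}n}{m+2}$ and $-\frac{\psi(r)}{\Gamma(r)}\frac{\ln^{m+1}n}{m+1}$, plus the boundary term $\frac{1}{\Gamma(r)}\frac{n^{\overline{r}}}{n^{r+1}}\ln^{m}n$, plus two genuinely convergent integral contributions. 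Subtracting the integral from the sum and letting $n\to\infty$, the two pairs of divergent terms cancel exactly, the $o(1)$ disappears, and the boundary term tends to $0$ because $n^{\overline{r}}/n^{r+1}\sim n^{-1}$ forces $\frac{n^{\overline{r}}}{n^{r+1}}\ln^{m}n\to0$. Hence
\[
\gamma_{h^{(r)}}^{\ast}(m)=\gamma_{h^{(r)}}(m)-\frac{1}{\Gamma(r)}\lim_{n\to\infty}\bigl(I_{n}+J_{n}\bigr),
\]
where $I_{n}$ denotes the elementary integral over $[1,n]$ and $J_{n}$ the double integral on the last line of (\ref{int}).

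Next I would evaluate $\lim_{n\to\infty}I_{n}=\int_{1}^{\infty}(\cdots)\,dx$ term by term using $\int_{1}^{\infty}x^{-a}\ln^{p}x\,dx=p!/(a-1)^{p+1}$, valid for $a>1$; every exponent occurring is at least $2$, so convergence is immediate. The first sum in (\ref{int}) then produces $m!\sum_{j=0}^{r-1}\genfrac{[}{]}{0pt}{0}{r}{j}\bigl((m+1)-(r-j)\psi(r)\bigr)(r-j)^{-m-2}$. For the second sum the key algebraic simplification is
\[
\frac{r+1}{(r+1-j)^{m+1}}-\frac{1}{(r+1-j)^{m}}=\frac{j}{(r+1-j)^{m+1}},
\]
which after multiplying by $m!$ and splitting off the $j=r$ term (where $\genfrac{[}{]}{0pt}{0}{r}{r}=1$ contributes exactly $r\,m!$, while the $j=0$ term vanishes) yields $r\,m!+m!\sum_{j=0}^{r-1}\genfrac{[}{]}{0pt}{0}{r}{j}\,j\,(r+1-j)^{-m-1}$. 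Combining the two sums reproduces, up to the factor $-1/\Gamma(r)$, precisely the first bracketed sum and the isolated term $-r\,m!/\Gamma(r)$ in the statement.

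Finally, for $J_{n}$ I would recognize the inner integral through the defining relation of the generalized integro-exponential function: $\int_{1}^{n}x^{-(r+1-j)}\ln^{m}x\,e^{-xt}\,dx\to m!\,E_{r+1-j}^{m}(t)$ as $n\to\infty$. The contribution $-\frac{m!}{\Gamma(r)}\sum_{j=0}^{r}\genfrac{[}{]}{0pt}{0}{r}{j}\int_{0}^{\infty}\bigl(\tfrac1t-\tfrac1{1-e^{-t}}\bigr)E_{r+1-j}^{m}(t)\,dt$ is then exactly the remaining term in the statement. The main obstacle is justifying the interchange of $\lim_{n\to\infty}$ with the outer $t$-integral in $J_{n}$. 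I would settle this by dominated convergence: the weight $\tfrac1t-\tfrac1{1-e^{-t}}$ is bounded on $(0,\infty)$ (tending to $-\tfrac12$ as $t\to0^{+}$ and to $-1$ as $t\to\infty$), while the inner integrals increase monotonically in $n$ to $m!\,E_{r+1-j}^{m}(t)$; it therefore suffices to check $\int_{0}^{\infty}\bigl|\tfrac1t-\tfrac1{1-e^{-t}}\bigr|E_{r+1-j}^{m}(t)\,dt<\infty$ for each $0\le j\le r$. This is clear near $t=\infty$, where $E_{s}^{m}(t)$ decays exponentially, and near $t=0$ the only delicate case is $j=r$, i.e. $s=r+1-j=1$, where $E_{1}^{m}(t)$ grows like $|\ln t|^{m+1}$, still integrable against the bounded weight. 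This secures the passage to the limit and completes the identification of all terms.
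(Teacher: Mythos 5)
Your argument is correct and follows essentially the same route as the paper: substitute Lemma \ref{Int} into the definition of $\gamma_{h^{(r)}}^{\ast}(m)$ so the divergent $\ln^{m+2}n$ and $\ln^{m+1}n$ terms cancel against those defining $\gamma_{h^{(r)}}(m)$, evaluate the elementary integrals via $\int_{1}^{\infty}x^{-a}\ln^{p}x\,dx=p!/(a-1)^{p+1}$, and identify the double integral with the $E_{r+1-j}^{m}$ terms. Your explicit treatment of the vanishing boundary term and the dominated-convergence justification for passing the limit through the outer $t$-integral are details the paper leaves implicit, but the computation and conclusion coincide with the published proof.
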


\begin{proof}
In the light of Lemma \ref{Int}, we have%
\begin{align}
\gamma_{h^{\left(  r\right)  }}^{\ast}\left(  m\right)   &  =\lim
_{n\rightarrow\infty}\left(  \sum_{k=1}^{n}\frac{h_{k}^{\left(  r\right)  }%
}{k^{r}}\ln^{m}k-\frac{1}{\Gamma\left(  r\right)  }\frac{\ln^{m+2}n}%
{m+2}+\frac{\psi\left(  r\right)  }{\Gamma\left(  r\right)  }\frac{\ln^{m+1}%
n}{m+1}\right) \nonumber\\
&  -\frac{1}{\Gamma\left(  r\right)  }%
{\displaystyle\sum_{j=0}^{r-1}}
\genfrac{[}{]}{0pt}{0}{r}{j}%
\int_{1}^{\infty}\frac{\ln^{m+1}x-\psi\left(  r\right)  \ln^{m}x}{x^{r+1-j}%
}dx\nonumber\\
&  -\frac{1}{\Gamma\left(  r\right)  }%
{\displaystyle\sum_{j=0}^{r}}
\genfrac{[}{]}{0pt}{0}{r}{j}%
\int_{1}^{\infty}\frac{\left(  r+1\right)  \ln^{m}x-m\ln^{m-1}x}{x^{r+2-j}%
}dx\nonumber\\
&  -\frac{1}{\Gamma\left(  r\right)  }%
{\displaystyle\sum_{j=0}^{r}}
\genfrac{[}{]}{0pt}{0}{r}{j}%
\int_{0}^{\infty}\left(  \frac{1}{t}-\frac{1}{1-e^{-t}}\right)  \left(
\int_{1}^{\infty}\frac{\ln^{m}x}{x^{r+1-j}}e^{-xt}dx\right)  dt. \label{12}%
\end{align}

Using the following reduction formula
\[
I\left(  \mu,k\right)  =\int_{1}^{\infty}\frac{\ln^{\mu}x}{x^{k}}dx=\frac{\mu
}{k-1}I\left(  \mu-1,k\right)
\]
we deduce that%
\[
I\left(  \mu,k\right)  =\frac{\mu!}{\left(  k-1\right)  ^{\mu}}I\left(
0,k\right)  =\frac{\mu!}{\left(  k-1\right)  ^{\mu+1}}.
\]
Hence after some rearrangements, (\ref{12}) becomes
\begin{align*}
\gamma_{h^{\left(  r\right)  }}^{\ast}\left(  m\right)   &  =\gamma
_{h^{\left(  r\right)  }}\left(  m\right)  -\frac{m!}{\Gamma\left(  r\right)
}%
{\displaystyle\sum_{j=0}^{r-1}}
\genfrac{[}{]}{0pt}{0}{r}{j}%
\left(  \frac{m+1}{\left(  r-j\right)  ^{m+2}}-\frac{\psi\left(  r\right)
}{\left(  r-j\right)  ^{m+1}}\right) \\
&  -\frac{m!}{\Gamma\left(  r\right)  }%
{\displaystyle\sum_{j=0}^{r}}
\genfrac{[}{]}{0pt}{0}{r}{j}%
\left(  \frac{r+1}{\left(  r+1-j\right)  ^{m+1}}-\frac{1}{\left(
r+1-j\right)  ^{m}}\right) \\
&  -\frac{m!}{\Gamma\left(  r\right)  }%
{\displaystyle\sum_{j=0}^{r}}
\genfrac{[}{]}{0pt}{0}{r}{j}%
\int_{0}^{\infty}\left(  \frac{1}{t}-\frac{1}{1-e^{-t}}\right)  E_{r+1-j}%
^{m}\left(  t\right)  dt,
\end{align*}
which completes the proof.
\end{proof}

We conclude the paper with result on evaluation of the integrals involving
generalized exponential function.

\begin{theorem}
\label{teo-exp}Let $p\in\mathbb{N}$. Then the integral%
\[
\int_{0}^{\infty}\left(  \frac{1}{t}-\frac{1}{1-e^{-t}}\right)  E_{p}%
^{0}\left(  t\right)  dt
\]
can be written as a finite combination of the Riemann zeta values
$\zeta\left(  l\right)  $ and $\zeta^{\prime}\left(  k\right)  $,
Euler-Mascheroni constant $\gamma,$ Stieltjes constant $\gamma\left(
1\right)  $\ and the constants $\sigma_{k}.$
\end{theorem}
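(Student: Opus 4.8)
The plan is to read off Theorem~\ref{teomod} at $m=0$ and invert it. Write $I_q=\int_0^\infty\left(\frac1t-\frac1{1-e^{-t}}\right)E_q^0(t)\,dt$ for the integral in question; these are exactly the integrals occurring in Theorem~\ref{teomod}, since there the index $r+1-j$ runs over $1,\dots,r+1$ as $j$ runs over $0,\dots,r$. After clearing $\Gamma(r)$, the case $m=0$ of Theorem~\ref{teomod} is the linear relation
\[
\sum_{j=0}^{r}\genfrac{[}{]}{0pt}{0}{r}{j}I_{r+1-j}
=\Gamma(r)\bigl(\gamma_{h^{(r)}}(0)-\gamma_{h^{(r)}}^{\ast}(0)\bigr)-c_r,
\]
where $c_r$ collects the elementary terms (rationals and values $\psi(r)$) produced in Theorem~\ref{teomod}. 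Because $\genfrac{[}{]}{0pt}{0}{r}{0}=0$ and $\genfrac{[}{]}{0pt}{0}{r}{r}=1$ for $r\ge1$, while the top surviving coefficient is $\genfrac{[}{]}{0pt}{0}{r}{1}=(r-1)!=\Gamma(r)$, this system is triangular: the highest index occurring is $I_r$, with coefficient $\Gamma(r)$, so the relation solves recursively for $I_r$ in terms of $I_1,\dots,I_{r-1}$ together with $\gamma_{h^{(r)}}(0)$, $\gamma_{h^{(r)}}^{\ast}(0)$ and $c_r$. Hence, by induction on $p$, it suffices to prove that $\gamma_{h^{(r)}}(0)$ and $\gamma_{h^{(r)}}^{\ast}(0)$ are finite combinations of the constants listed in the statement; the base case is $r=1$, where the relation degenerates to $I_1=\gamma_{h^{(1)}}(0)-\gamma_{h^{(1)}}^{\ast}(0)-c_1$.

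For $\gamma_{h^{(r)}}(0)$ I would iterate Theorem~\ref{teorc} at $m=0$ down to $r=1$. Each step introduces only $\gamma(0)=\gamma$, the zeta values $\zeta(r+1-j)$, and the Euler sum $\zeta_{h^{(r)}}(r+1)$; the last is a value of the hyperharmonic zeta function at an integer argument exceeding $r$, hence a convergent Euler sum of hyperharmonic numbers, which by the evaluations recalled in the introduction \cite{CKDS,DB,KCDC,MD,Xu2} lies in the $\mathbb{Q}$-span of the $\zeta(l)$ and the $\sigma_k$. The recursion terminates at $\gamma_{h^{(1)}}(0)=\gamma_H(0)$, given explicitly by Candelpergher and Coppo \cite{CC} in terms of $\gamma$, $\gamma(1)$, $\zeta'(2)$ and $\sigma_1$. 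Thus every $\gamma_{h^{(r)}}(0)$ is admissible.

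The crux is $\gamma_{h^{(r)}}^{\ast}(0)$. Interchanging the order of integration in $I_q$ and using $\psi(x)=\ln x+\int_0^\infty(\frac1t-\frac1{1-e^{-t}})e^{-xt}\,dt$ (the Binet formula already invoked in Lemma~\ref{Int}), I would first record the reformulation $I_q=\int_1^\infty\frac{\psi(x)-\ln x}{x^q}\,dx$, which ties the integrals directly to loggamma-type quantities. Writing $h_x^{(r)}$ through (\ref{6}) and (\ref{2}) as a finite sum of terms $x^{j-1-r}\bigl(\psi(x+r)-\psi(r)\bigr)$, the constant $\gamma_{h^{(r)}}^{\ast}(0)=\lim_{n}\bigl(\sum_{k\le n}h_k^{(r)}/k^r-\int_1^n h_x^{(r)}/x^r\,dx\bigr)$ becomes a finite combination of limits of the shape (\ref{MaC}) for the functions $x^{j-1-r}\psi(x+r)$. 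Each such constant is then evaluated by inserting the Binet integral for $\psi$ and integrating by parts against the antiderivative $\ln\Gamma(x)-x\ln x+x$ of $\psi(x)-\ln x$; the boundary terms contribute rationals, and the residual integrals reduce, by Raabe-type and standard loggamma evaluations, to $\zeta$-values, $\zeta'$-values, $\gamma$, $\gamma(1)$ and the $\sigma_k$.

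The step I expect to be the main obstacle is precisely this evaluation of $\gamma_{h^{(r)}}^{\ast}(0)$: after expanding $h_x^{(r)}$ the individual pieces are separately divergent (the $x^{-1}\ln x$ and $x^{-1}$ tails cancel only in the full sum), so convergence must be controlled by grouping terms before passing to the limit, and the loggamma integrals must be brought into the claimed basis without introducing spurious transcendentals. Once $\gamma_{h^{(r)}}^{\ast}(0)$ and $\gamma_{h^{(r)}}(0)$ are both known to be admissible, the triangular recursion delivers each $I_p$ as a finite combination of $\zeta(l)$, $\zeta'(k)$, $\gamma$, $\gamma(1)$ and $\sigma_k$; as a final check I would recover the stated values of $I_1$ and $I_2$ from the $r=1$ and $r=2$ instances.
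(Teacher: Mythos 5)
Your overall architecture matches the paper's: take $m=0$ in Theorem \ref{teomod}, observe that $\genfrac{[}{]}{0pt}{0}{r}{0}=0$ and $\genfrac{[}{]}{0pt}{0}{r}{1}=\Gamma(r)$ make the resulting relation triangular in the integrals $I_q$, and reduce everything to showing that $\gamma_{h^{(r)}}(0)$ and $\gamma_{h^{(r)}}^{\ast}(0)$ lie in the claimed span. Your treatment of $\gamma_{h^{(r)}}(0)$ (iterating Theorem \ref{teorc} down to $\gamma_H(0)$, with the convergent Euler sums $\zeta_{h^{(r)}}(r+1)$ handled by the cited literature) is a legitimate alternative to the paper's route, which instead derives the closed form (\ref{teoghr2}) from \cite[Lemma 4]{CDKCG} together with the Xu--Li evaluation (\ref{14}).

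The genuine gap is exactly the step you flag as ``the main obstacle'': the evaluation of $\gamma_{h^{(r)}}^{\ast}(0)$. The paper does not attempt the computation you sketch; it imports the closed-form expression for $\Gamma(r)\gamma_{h^{(r)}}^{\ast}$ from Theorem 8 of \cite{CDKCG}, and the entire proof consists of equating that known expression with the one produced by Theorem \ref{teomod}. Your proposed substitute is both incomplete and at risk of circularity: inserting the Binet integral $\psi(x)=\ln x+\int_{0}^{\infty}\left(\frac{1}{t}-\frac{1}{1-e^{-t}}\right)e^{-xt}dt$ into $\int_{1}^{n}h_{x}^{(r)}x^{-r}dx$ is precisely what Lemma \ref{Int} already does, and it regenerates the unknown integrals $I_q=\int_{1}^{\infty}\frac{\psi(x)-\ln x}{x^{q}}dx$ rather than evaluating them --- you would recover Theorem \ref{teomod} as a tautology. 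The alternative branch of your sketch, integrating by parts against $\ln\Gamma(x)-x\ln x+x$, trades $I_q$ for $\int_{1}^{\infty}x^{-q-1}\ln\Gamma(x)\,dx$, whose reduction to $\zeta(l)$, $\zeta'(k)$, $\gamma$, $\gamma(1)$ and $\sigma_k$ is not a ``standard loggamma evaluation'' (Raabe's formula gives $\int_{a}^{a+1}\ln\Gamma$, not these Mellin-type integrals) and is essentially equivalent in difficulty to the theorem being proved. Without an independent evaluation of $\gamma_{h^{(r)}}^{\ast}(0)$ --- which is what \cite{CDKCG} supplies --- the triangular system has one unknown too many and the argument does not close.
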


\begin{proof}
In special case $m=0,$ the constants $\gamma_{h^{\left(  r\right)  }}^{\ast
}\left(  0\right)  $ reduce to the constants
\[
\gamma_{h^{\left(  r\right)  }}^{\ast}=\lim_{n\rightarrow\infty}\left(
\sum_{k=1}^{n}\frac{h_{k}^{\left(  r\right)  }}{k^{r}}-\int_{1}^{n}\frac
{h_{x}^{\left(  r\right)  }}{x^{r}}dx\right)
\]
introduced in recent paper \cite{CDKCG}. Accordingly, we have two alternative
representations for $\gamma_{h^{\left(  r\right)  }}^{\ast}\left(  0\right)
=\gamma_{h^{\left(  r\right)  }}^{\ast}$. The first one appears from Theorem
\ref{teomod}:
\begin{align*}
\Gamma\left(  r\right)  \gamma_{h^{\left(  r\right)  }}^{\ast}  &
=\Gamma\left(  r\right)  \gamma_{h^{\left(  r\right)  }}\left(  0\right)  -%
{\displaystyle\sum_{j=0}^{r-1}}
\genfrac{[}{]}{0pt}{0}{r}{j}%
\left(  \frac{1-\left(  r-j\right)  \psi\left(  r\right)  }{\left(
r-j\right)  ^{2}}+\frac{j}{\left(  r+1-j\right)  }\right) \\
&  -r-%
{\displaystyle\sum_{j=0}^{r}}
\genfrac{[}{]}{0pt}{0}{r}{j}%
\int_{0}^{\infty}\left(  \frac{1}{t}-\frac{1}{1-e^{-t}}\right)  E_{r+1-j}%
^{0}\left(  t\right)  dt,
\end{align*}
and the second one is given in Theorem 8 of \cite{CDKCG}:
\begin{align*}
&  \Gamma\left(  r\right)  \gamma_{h^{\left(  r\right)  }}^{\ast}=\frac{1}%
{2}\gamma^{2}-\frac{1}{2}\zeta\left(  2\right)  +\sigma_{1}+\gamma\left(
1\right)  +\sum_{j=1}^{r-1}\frac{H_{j}}{j}-\left(  \psi\left(  r\right)
+\gamma\right)  \gamma+r!\\
&  -r+\sum_{j=1}^{r-1}%
\genfrac{[}{]}{0pt}{}{r}{j}%
\left\{
\begin{array}
[c]{l}%
\frac{r+3-j}{2}\zeta\left(  r+2-j\right)  -\frac{1}{2}\sum\limits_{v=1}%
^{r-j-1}\zeta\left(  r+1-j-v\right)  \zeta\left(  v+1\right) \\
-\sum\limits_{v=2}^{r-j}\left(  -1\right)  ^{v}\zeta\left(  r+2-j-v\right)
\left(  H_{r-1}^{\left(  v\right)  }+\dfrac{\left(  -1\right)  ^{r-j}}%
{v-1}\right)  +\frac{H_{r-1}}{r-j}\\
+\left(  -1\right)  ^{r-j}\left(  \sigma_{r+1-j}-\zeta^{\prime}\left(
r+1-j\right)  +\sum\limits_{v=1}^{r-1}\frac{H_{v}}{v^{r+1-j}}\right)
-\frac{r}{r+1-j}%
\end{array}
\right\}
\end{align*}
where $H_{p}^{\left(  v\right)  }=\sum_{k=1}^{p}k^{-v}$ are the generalized
harmonic numbers. As a consequence of these two representations we have%
\begin{align}
&
{\displaystyle\sum_{j=0}^{r}}
\genfrac{[}{]}{0pt}{0}{r}{j}%
\int_{0}^{\infty}\left(  \frac{1}{t}-\frac{1}{1-e^{-t}}\right)  E_{r+1-j}%
^{0}\left(  t\right)  dt\nonumber\\
&  =\Gamma\left(  r\right)  \gamma_{h^{\left(  r\right)  }}\left(  0\right)
-\frac{1}{2}\gamma^{2}+\frac{1}{2}\zeta\left(  2\right)  -\sigma_{1}%
-\gamma\left(  1\right)  -\sum_{j=1}^{r-1}\frac{H_{j}}{j}+\left(  \psi\left(
r\right)  +\gamma\right)  \gamma-r!\nonumber\\
&  -\sum_{j=1}^{r-1}%
\genfrac{[}{]}{0pt}{}{r}{j}%
\left\{
\begin{array}
[c]{l}%
\frac{r+3-j}{2}\zeta\left(  r+2-j\right)  -\frac{1}{2}\sum\limits_{v=1}%
^{r-j-1}\zeta\left(  r+1-j-v\right)  \zeta\left(  v+1\right) \\
-\sum\limits_{v=2}^{r-j}\left(  -1\right)  ^{v}\zeta\left(  r+2-j-v\right)
\left(  H_{r-1}^{\left(  v\right)  }+\dfrac{\left(  -1\right)  ^{r-j}}%
{v-1}\right)  +\frac{H_{r-1}}{r-j}\\
+\left(  -1\right)  ^{r-j}\left(  \sigma_{r+1-j}-\zeta^{\prime}\left(
r+1-j\right)  +\sum\limits_{v=1}^{r-1}\frac{H_{v}}{v^{r+1-j}}\right)
-\frac{r}{r+1-j}%
\end{array}
\right\} \nonumber\\
&  -%
{\displaystyle\sum_{j=0}^{r-1}}
\genfrac{[}{]}{0pt}{0}{r}{j}%
\left(  \frac{1-\left(  r-j\right)  \psi\left(  r\right)  }{\left(
r-j\right)  ^{2}}+\frac{j}{r+1-j}\right)  . \label{int-ie}%
\end{align}
To complete the proof it is enough to show that the constants
\[
\gamma_{h^{\left(  r\right)  }}:=\gamma_{h^{\left(  r\right)  }}\left(
0\right)  =\lim_{x\rightarrow\infty}\left(  \sum_{n\leq x}\frac{h_{n}^{\left(
r\right)  }}{n^{r}}-\frac{1}{2\Gamma\left(  r\right)  }\ln^{2}x+\frac
{\psi\left(  r\right)  }{\Gamma\left(  r\right)  }\ln x\right)
\]
can be written in terms of zeta values, Euler-Mascheroni constant and harmonic
numbers. To achieve this we appeal the following expression \cite[Lemma
4]{CDKCG}:
\begin{align*}
\sum_{k=1}^{n}\frac{h_{k}^{\left(  r\right)  }}{k^{r}}  &  =\frac{\left(
H_{n}\right)  ^{2}+H_{n}^{\left(  2\right)  }}{2\Gamma\left(  r\right)
}-\frac{\psi\left(  r\right)  +\gamma}{\Gamma\left(  r\right)  }H_{n}\\
&  +\frac{1}{\Gamma\left(  r\right)  }\sum_{j=0}^{r-1}%
\genfrac{[}{]}{0pt}{}{r}{j}%
\left(  \sum_{k=1}^{n}\frac{H_{k+r-1}}{k^{r+1-j}}-H_{r-1}H_{n}^{\left(
r+1-j\right)  }\right) \\
&  +\frac{1}{\Gamma\left(  r\right)  }\sum_{j=1}^{r-1}\frac{H_{j}}{j}%
-\frac{H_{r-1}}{\Gamma\left(  r\right)  }\left(  H_{n+r-1}-H_{n}\right)
+\frac{1}{\Gamma\left(  r\right)  }\sum_{j=1}^{r-1}\frac{H_{j-1}}{j+n},
\end{align*}
where $r$ is a non-negative integer.

We now utilize the asymptotic expression (cf. (\ref{3}))%
\[
H_{n}=\ln n+\gamma+O\left(  n^{-1}\right)
\]
to find that%
\[
\lim_{x\rightarrow\infty}\left(  \frac{\left(  H_{n}\right)  ^{2}}{2}-\left(
\psi\left(  r\right)  +\gamma\right)  H_{n}-\frac{\ln^{2}n}{2}+\psi\left(
r\right)  \ln n\right)  =-\frac{\gamma^{2}}{2}-\gamma\psi\left(  r\right)  .
\]
Hence,
\begin{align}
\Gamma\left(  r\right)  \gamma_{h^{\left(  r\right)  }}  &  =-\frac{\gamma
^{2}}{2}-\gamma\psi\left(  r\right)  +\frac{\zeta\left(  2\right)  }{2}%
+\frac{\left(  H_{r-1}\right)  ^{2}+H_{r-1}^{\left(  2\right)  }}%
{2}\nonumber\\
&  +\sum_{j=1}^{r-1}%
\genfrac{[}{]}{0pt}{}{r}{j}%
\left(  \widetilde{\zeta}_{H}\left(  r+1-j,r-1\right)  -H_{r-1}\zeta\left(
r+1-j\right)  \right)  . \label{teoghr2}%
\end{align}
On the other hand, it is known that the values $\widetilde{\zeta}_{H}\left(
p,r-1\right)  $ can be written as \cite[Theorem 2.1]{XL}
\begin{align}
\widetilde{\zeta}_{H}\left(  p,r-1\right)   &  =\frac{1}{2}\left(  p+2\right)
\zeta\left(  p+1\right)  -\frac{1}{2}\sum_{v=1}^{p-2}\zeta\left(  p-v\right)
\zeta\left(  v+1\right) \nonumber\\
&  -\sum_{v=1}^{p-1}\left(  -1\right)  ^{v}\zeta\left(  p+1-v\right)
H_{r-1}^{\left(  v\right)  }-\left(  -1\right)  ^{p}\sum_{v=1}^{r}\frac{H_{v}%
}{v^{p}} \label{14}%
\end{align}
for $p\in\mathbb{N}\backslash\left\{  1\right\}  .$ Combining (\ref{teoghr2})
and (\ref{14}) we accomplish that the constants $\gamma_{h^{\left(  r\right)
}}$ can be written in terms of zeta values, Euler-Mascheroni constant and
harmonic numbers.

Therefore it can be seen from (\ref{int-ie}), (\ref{teoghr2}) and (\ref{14})
that integrals involving $E_{p}^{0}\left(  t\right)  $ can be written in terms
of some special constants which is the assertion of Theorem \ref{teo-exp}.
\end{proof}

\begin{remark}
1) It follows from (\ref{teoghr2}) and (\ref{14}) that the constants
$\gamma_{h^{\left(  r\right)  }}$ can be explicitly written as%
\begin{align*}
\Gamma\left(  r\right)  \gamma_{h^{\left(  r\right)  }}  &  =-\frac{\gamma
^{2}}{2}-\gamma\psi\left(  r\right)  +\frac{\zeta\left(  2\right)  }{2}%
+\frac{\left(  H_{r-1}\right)  ^{2}+H_{r-1}^{\left(  2\right)  }}{2}\\
&  +\frac{1}{2}\sum_{j=1}^{r-1}%
\genfrac{[}{]}{0pt}{}{r}{j}%
\left(  r+3-j\right)  \zeta\left(  r+2-j\right)  -\sum_{v=1}^{r}H_{v}%
\sum_{j=1}^{r-1}%
\genfrac{[}{]}{0pt}{}{r}{j}%
\frac{\left(  -1\right)  ^{r+1-j}}{v^{r+1-j}}\\
&  +\sum_{v=2}^{r-1}\left(  \left(  -1\right)  ^{v-1}H_{r-1}^{\left(
v\right)  }-\frac{1}{2}\zeta\left(  v\right)  \right)  \sum_{j=v}^{r-1}%
\genfrac{[}{]}{0pt}{}{r}{r-j}%
\zeta\left(  2+j-v\right)  .
\end{align*}
In particular, when $r=1$ it reduces to
\[
\gamma_{h^{\left(  1\right)  }}=\frac{\gamma^{2}}{2}+\frac{\zeta\left(
2\right)  }{2}%
\]
and coincides with the constant $\gamma_{H}\left(  0\right)  $ recorded in
\cite[Eq. (6)]{CC}.

2) Equation (\ref{int-ie}) can be used recursively to find exact formulas for
the integrals mentioned in Theorem \ref{teo-exp}. For instance, we have
\begin{align*}
\int_{0}^{\infty}\left(  \frac{1}{t}-\frac{1}{1-e^{-t}}\right)  E_{1}%
^{0}\left(  t\right)  dt  &  =\gamma_{h^{\left(  1\right)  }}-\frac{1}%
{2}\gamma^{2}-\gamma\left(  1\right)  -\sigma_{1}+\frac{1}{2}\zeta\left(
2\right)  -1,\\
\int_{0}^{\infty}\left(  \frac{1}{t}-\frac{1}{1-e^{-t}}\right)  E_{2}%
^{0}\left(  t\right)  dt  &  =\gamma_{h^{\left(  2\right)  }}-\gamma
_{h^{\left(  1\right)  }}-2\zeta\left(  3\right)  -\zeta^{\prime}\left(
2\right)  +\sigma_{2}-\frac{3}{2}%
\end{align*}
and%
\[
\int_{0}^{\infty}\left(  \frac{1}{t}-\frac{1}{1-e^{-t}}\right)  E_{3}%
^{0}\left(  t\right)  dt =\gamma_{h^{\left(  3\right)  }}-\frac{3}{2}%
\gamma_{h^{\left(  2\right)  }}+\gamma_{h^{\left(  1\right)  }}-\frac{5}%
{4}\gamma-\sigma_{3}+\zeta^{\prime}\left(  3\right)  -\frac{\pi^{4}}{72}%
+\frac{3}{8}\pi^{2}-\frac{7}{12} .
\]

\end{remark}


\begin{thebibliography}{99}                                                                                               %


\bibitem {Ad}J.~A. Adell.
\newblock {Asymptotic estimates for Stieltjes constants: a probabilistic
approach}.
\newblock {\em Proceedings of the Royal Society A: Mathematical, Physical and
Engineering Sciences}, 467(2128):954--963, 2011.

\bibitem {AV}T.~M. Apostol and T.~H. Vu.
\newblock {Dirichlet series related to the Riemann zeta function}.
\newblock {\em Journal of Number Theory}, 19(1):85--102, 1984.

\bibitem {Be}B.~C. Berndt. \newblock {On the Hurwitz zeta-function}.
\newblock {\em Rocky Mountain Journal of Mathematics}, 2, No. 1:151--157, 1972.

\bibitem {Bl1}I.~V. Blagouchine.
\newblock {A theorem for the closed-form evaluation of the first generalized
Stieltjes constant at rational arguments and some related summations}.
\newblock {\em Journal of Number Theory}, 148:537--592, 2015.

\bibitem {Bl2}I.~V. Blagouchine.
\newblock {Expansions of generalized Euler's constants into the series of
polynomials in $\pi^{-2}$ and into the formal enveloping series with rational
coefficients only}. \newblock {\em Journal of Number Theory}, 158:365--396, 2016.

\bibitem {Bo}K.~N. Boyadzhiev. \newblock A special constant and series with
zeta values and harmonic numbers. \newblock {\em Gazeta Matematica Seria A},
115:1--16, 2018.

\bibitem {BGP}K.~N. Boyadzhiev, H.~G. Gadiyar, and R.~Padma.
\newblock {The values of an Euler sum at the negative integers and a relation
to a certain convolution of Bernoulli numbers}.
\newblock {\em Bulletin of the Korean Mathematical Society}, 45(2):277--283, 2008.

\bibitem {BB}W.~E. Briggs and R.~G. Buschman.
\newblock {The power series coefficients of functions defined Dirichlet
series}. \newblock {\em Illinois Journal of Mathematics}, 5(1):43--44, 1961.

\bibitem {BCh}W.~E. Briggs and S.~Chowla. \newblock The power series
coefficients of $\zeta\left(  s\right)  $.
\newblock {\em American Mathematical Monthly}, 62:323--325, 1955.

\bibitem {CDKCG}M.~Can, A.~Dil, L.~Karg{\i}n, M.~Cenkci, and M.~G{\"u}%
lo{\u{g}}lu.
\newblock {Generalizations of the Euler-Mascheroni constant associated with the
hyperharmonic numbers}. \newblock {\em https://arxiv.org/abs/2109.01515}, 2021.

\bibitem {CKDS}M.~Can, L.~Karg{\i}n, A.~Dil, and G.~Soylu. \newblock Euler
sums of generalized harmonic numbers and connected extensions.
\newblock {\em https://arxiv.org/abs/2006.00620}, 2020.

\bibitem {CC}B.~Candelpergher and M.-A. Coppo. \newblock Laurent expansion of
harmonic zeta functions.
\newblock {\em Journal of Mathematical Analysis and Applications}, 491:124309, 2020.

\bibitem {C}J.~Choi.
\newblock {Certain integral representations of Stieltjes constants $\gamma$ n}.
\newblock {\em Journal of Inequalities and Applications}, 2013(1):1--10, 2013.

\bibitem {Cof1}M.~W. Coffey.
\newblock {Hypergeometric summation representations of the Stieltjes
constants}. \newblock 2013.

\bibitem {Cof2}M.~W. Coffey.
\newblock {Series representations for the Stieltjes constants}.
\newblock {\em Rocky Mountain Journal of Mathematics}, 44(2):443--477, 2014.

\bibitem {CG}J.~H. Conway and R.~Guy. \newblock {\em The Book of Numbers}.
\newblock Springer Science \& Business Media, 1998.

\bibitem {CoC}M.-A. Coppo and B.~Candelpergher.
\newblock {A note on some formulae related to Euler sums}. \newblock {\em https://hal.univ-cotedazur.fr/hal-03170892}.

\bibitem {DB}A.~Dil and K.~N. Boyadzhiev.
\newblock {Euler sums of hyperharmonic numbers}.
\newblock {\em Journal of Number Theory}, 147:490--498, 2015.

\bibitem {DBA}A.~Dil, K.~N. Boyadzhiev, and I.~A. Aliyev.
\newblock {On values of the Riemann zeta function at positive integers}.
\newblock {\em Lithuanian Mathematical Journal}, 60:9--24, 2020.

\bibitem {DM}A.~Dil and E.~Muniro{\u{g}}lu. \newblock Applications of
derivative and difference operators on some sequences.
\newblock {\em Applicable Analysis and Discrete Mathematics}, 14(2):406--430, 2020.

\bibitem {E}L.~Euler. \newblock Opera omnia, vol. 15.
\newblock {\em Teubner, Berlin}, 1917.

\bibitem {FB}J.~Fredrik and I.~V. Blagouchine.
\newblock {Computing Stieltjes constants using complex integration}.
\newblock {\em Mathematics of Computation}, 88(318):1829--1850, 2019.

\bibitem {Ha}G.~H. Hardy.
\newblock {Note on Dr. Vacca’s series for $\gamma$}.
\newblock {\em Quart. J. Pure Appl. Math}, 43:215--216, 1912.

\bibitem {KK}K.~Kamano. \newblock Dirichlet series associated with
hyperharmonic numbers.
\newblock {\em Memoirs of the Osaka Institute of Technology}, 56:11--15, 2011.

\bibitem {KCDC}L.~Karg{\i}n, M.~Can, A.~Dil, and M.~Cenkci.
\newblock {On evaluations of Euler-type sums of hyperharmonic number}.
\newblock {\em Bulletin of the Malaysian Mathematical Sciences Society},
45:113--131, 2022.

\bibitem {LiT}J.~J.~Y. Liang and J.~Todd. \newblock {The Stieltjes constants}.
\newblock {\em J. Res. Nat. Bur. Standards Sect. B}, 76:161--178, 1972.

\bibitem {Ma}Y.~Matsuoka.
\newblock {On the values of a certain Dirichlet series at rational integers}.
\newblock {\em Tokyo Journal of Mathematics}, 5(2):399--403, 1982.

\bibitem {M}I.~Mez\H{o}. \newblock Analytic extension of hyperharmonic
numbers. \newblock {\em Online Journal of Analytic Combinatorics}, 4, 2009.

\bibitem {MD}I.~Mez\H{o} and A.~Dil.
\newblock {Hyperharmonic series involving Hurwitz zeta function}.
\newblock {\em Journal of Number Theory}, 130:360--369, 2010.

\bibitem {Mi}M.~S. Milgram. \newblock Approximate solutions to the half-space
integral transport equation near a plane boundary.
\newblock {\em Canadian Journal of Physics}, 58(9):1291--1310, 1980.

\bibitem {Mil}M.~S. Milgram. \newblock The generalized integro-exponential
function. \newblock {\em Mathematics of computation}, 44(170):443--458, 1985.

\bibitem {SC}H.~M. Srivastava and J.~Choi.
\newblock {\em Series Associated with the Zeta and Related Functions}.
\newblock Kluwer Academic Publishers, Dordrecht-Boston-London, 2001.

\bibitem {HK}H.~Su and M.-S. Kim.
\newblock {On the Stieltjes constants and gamma functions with respect to
alternating Hurwitz zeta functions}.
\newblock {\em Journal of Mathematical Analysis and Applications}, page
125930, 2021.

\bibitem {Hu}H.~C. Van~de Hulst. \newblock Scattering in a planetary
atmosphere. \newblock {\em Astrophysical Journal}, pages 107--220, 1948.

\bibitem {We}E.~Weisstein. \newblock Maclaurin-{C}auchy theorem.
\newblock {\em http://mathworld.wolfram.com/ Maclaurin-CauchyTheorem.html},
accessed December 21, 2021.

\bibitem {Xu2}C.~Xu.
\newblock {Computation and theory of Euler sums of generalized hyperharmonic
numbers}. \newblock {\em Comptes Rendus Mathematique}, 356(3):243--252, 2018.

\bibitem {XL}C.~Xu and Z.~Li.
\newblock {Tornheim type series and nonlinear Euler sums}.
\newblock {\em Journal of Number Theory}, 174:40--67, 2017.
\end{thebibliography}
\end{document}